  \definecolor{luhblue}{RGB}{0,80,155} 
  \definecolor{darkblue}{rgb}{0,0,0.5}
  \definecolor{darkred}{rgb}{0.5,0,0}
  \definecolor{darkgreen}{rgb}{0,0.5,0}
\tikzset{commutative diagrams/.cd}
\newtheorem{theorem}{Theorem}[section]
\newtheorem{lemma}[theorem]{Lemma}
\newtheorem{proposition}[theorem]{Proposition}
\theoremstyle{definition}
\newtheorem{definition}[theorem]{Definition}
\theoremstyle{remark}
\newtheorem{remark}[theorem]{Remark}
\newtheorem{example}[theorem]{Example}
\newtheorem*{acknowledgements}{Acknowledgements}
\renewcommand{\(}{\left(}
\renewcommand{\)}{\right)}
\newcommand{\ZZ}{\mathbb{Z}}
\newcommand{\QQ}{\mathbb{Q}}
\newcommand{\Zp}{\mathbb{Z}_p}
\newcommand{\Qp}{\mathbb{Q}_p}
\newcommand{\RR}{\mathbb{R}}
\newcommand{\Fp}{\mathbb{F}_p}
\renewcommand{\AA}{\mathbb{A}}
\newcommand{\PP}{\mathbb{P}}
\newcommand{\sD}{\mathcal{D}}
\newcommand{\sG}{\mathcal{G}}
\newcommand{\sM}{\mathcal{M}}
\newcommand{\sO}{\mathcal{O}}
\newcommand{\sU}{\mathcal{U}}
\newcommand{\sV}{\mathcal{V}}
\newcommand{\sX}{\mathcal{X}}
\newcommand{\Domega}{D_{\uomega}}
\newcommand{\sDomega}{\sD_{\uomega}}
\newcommand{\spec}{\mathrm{Spec}}
\DeclareMathOperator{\HH}{H}
\DeclareMathOperator{\Br}{Br}
\DeclareMathOperator{\inv}{inv}
\DeclareMathOperator{\Cl}{Cl}
\DeclareMathOperator{\GL}{GL}
\DeclareMathOperator{\SL}{SL}
\renewcommand{\val}{v}
\DeclareMathOperator{\valp}{v_{\it p}}
\DeclareMathOperator{\et}{\acute{e}t}
\renewcommand{\epsilon}{\varepsilon}
\DeclareMathOperator{\uomega}{\underline{\omega}}
\DeclareMathOperator{\red}{red}
\DeclareMathOperator{\fin}{fin}
\DeclareMathOperator{\cc}{C}
\DeclareMathOperator{\dd}{D}
\DeclareMathOperator{\str}{st}
\DeclareMathOperator{\ns}{ns}
\numberwithin{equation}{section}
\begin{document}
\onehalfspacing
\title[Semi-integral points on Markoff orbifold pairs]{Local-global principles for semi-integral points on Markoff orbifold pairs}

\author{Vladimir Mitankin}
  \address{Vladimir Mitankin, Institute of Mathematics and Informatics, Bulgarian Academy of Sciences, Acad. G. Bonchev St. bl.8, 1113 Sofia, Bulgaria}

  \address{Institut für Algebra, Zahlentheorie und Diskrete Mathematik,
Leibniz Universität Hannover, Welfengarten 1, 30167 Hannover, Germany}
  \email{v.mitankin@math.bas.bg}

\author{Justin Uhlemann}
  \address{Justin Uhlemann, Mathematical Institute, Utrecht University, Hans Freudenthalgebouw, Budapestlaan 6, 3584 CD Utrecht, The Netherlands}
  \email{j.a.uhlemann@uu.nl}

\date{\today}
\thanks{2020 {\em Mathematics Subject Classification} 
   14G05, 14G12, 11D25 (primary), 11G35 (secondary).
}

\begin{abstract}
  We study local-global principles for semi-integral points on orbifold pairs of Markoff type. In particular, we analyse when these orbifold pairs satisfy weak weak approximation, weak approximation and strong approximation off a finite set of places. We show that Markoff orbifold pairs satisfy the semi-integral Hasse principle and we measure how often such orbifold pairs have strict semi-integral points but the corresponding Markoff surface lacks integral points.
\end{abstract}  

\maketitle
\setcounter{tocdepth}{1}
\tableofcontents

\section{Introduction}
\label{sec:intro}
Let $m \neq 0,4$ be an integer. A smooth affine Markoff surface $U_m$ over $\QQ$ and its $\ZZ$-model $\sU_m$, given by the same equation, are defined by
\begin{equation}
  \label{eqn:Um}
  U_m: \quad u_1^2 + u_2^2 + u_3^2 - u_1u_2u_3 = m.
\end{equation}
Ghosh and Sarnak's pioneering work \cite{Ghosh2017IntegralPO} conjectured the amount of $m$ with $\sU_m(\ZZ) = \emptyset$ but no local obstructions to that, which sparked a lot of interest in the number theory community. Soon after, two independent studies \cite{LM21} and \cite{colxu_2020} examined that conjecture together with the density of integral points on $\sU_m$. The behaviour of integral points on singular and generalised versions of Markoff surfaces have been investigated in \cite{Zag82}, \cite{Sil90}, \cite{Bar05}, \cite{Chen21}, \cite{CS23}, \cite{Dao23}, \cite{KSS23}, \cite{Dao24}, \cite{Mis24}. 

This work pursues a different direction by exploring for the first time Campana and Darmon points, collectively abbreviated semi-integral, on orbifold pairs defined by \eqref{eqn:Um}. Semi-integrality (Definition~\ref{def:semi-integral}) is a concept generalising integral and rational points and at the same time interpolating between them. We conduct a comprehensive study of the semi-integral local-global principles, as introduced in \cite[\S2]{MNS22}, for Markoff orbifold pairs and compare their status with their rational and integral counterparts. This introduces the investigation of pairs with multiple irreducible components of the boundary.

\subsection*{Local-global principles}
\label{subsec:local-global}
To put things in perspective, consider the smooth compactification $X_m \subset \PP_{\QQ}^3$ of $U_m$, with a $\ZZ$-model $\sX_m$ defined by the same equation, given by
\begin{equation}
  \label{eqn:Xm}
  X_m: \quad x_0(x_1^2 + x_2^2 + x_3^2) - x_1x_2x_3 = mx_0^3.
\end{equation}
The existence of a rational point $(0:0:1:1) \in X_m(\QQ)$ implies that $X_m$ is unirational \cite{Kol02} and that it satisfies weak weak approximation \cite{SD01}. In fact, $X_m$ is rational, and thus satisfies weak approximation, if and only if $m - 4 \in \QQ(\sqrt{m})^{*2}$ by \cite[Lem.~3.3.]{LM21}, which is equivalent to $m - 4 \in \QQ^{*2}$ (see Remark~\ref{rem:m}). The reverse implication also holds.

\begin{theorem} 
  \label{thm:wa-Xm}
  The variety $X_m$ satisfies weak approximation for $\QQ$-rational points if and only if $X_m$ is rational.
\end{theorem}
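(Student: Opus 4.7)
The ``if'' direction is standard: weak approximation is a birational invariant of smooth projective varieties over $\QQ$, and $\PP_\QQ^3$ satisfies it, so any smooth projective $\QQ$-rational variety does too.

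For the converse, by Remark~\ref{rem:m} and \cite[Lem.~3.3]{LM21} it suffices to show that if $d := m - 4 \notin \QQ^{*2}$, then $X_m$ does not satisfy weak approximation. The plan is to exhibit a Brauer--Manin obstruction. The algebraic input is the identity
\[
  (2u_1 - u_2 u_3)^2 = (u_2^2 - 4)(u_3^2 - 4) + 4d,
\]
valid on $U_m$ and obtained by viewing \eqref{eqn:Um} as a quadratic in $u_1$. Over $\QQ(\sqrt{d})$ the right-hand side factors as a product of linear forms in $u_2$ and $u_3$, which suggests looking for an unramified quaternion class $\alpha \in \Br(X_m)$ arising as a corestriction from $\QQ(\sqrt{d})$ of a symbol in $(2u_1 - u_2 u_3) - 2\sqrt{d}$ and $u_2 - 2$ (or a close variant).

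The argument then proceeds in two steps. First, verify that $\alpha$ is unramified along each irreducible component of the boundary $X_m \setminus U_m$ (the three lines in the plane $x_0 = 0$) and along the divisors where the symbol could a priori ramify, in particular $u_i = \pm 2$; this promotes $\alpha$ to a class in $\Br(X_m)$. Second, compute the local invariants $\inv_v \alpha(P_v)$ at the finitely many places of ramification, namely $v = \infty$ and $v \mid 2d$, and exhibit adelic points whose total invariants differ. Global reciprocity then forces one such adelic point to lie outside the closure of $X_m(\QQ)$, contradicting weak approximation.

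The main obstacle is the first step: pinning down a symbol whose residues all vanish and which remains nontrivial in $\Br(X_m)/\Br(\QQ)$. In principle one could instead compute $H^1(\QQ, \Pic(\bar X_m))$ directly from the Galois action on the $27$ lines of the cubic $X_m$ and extract the obstructing class from the algebraic Brauer group that way; but the ad hoc symbol above is likely the most efficient route. Once $\alpha$ is fixed, the local invariant computations at the ramified places of $\QQ(\sqrt{d})/\QQ$ are routine, and non-squareness of $d$ ensures that $\inv_v \alpha$ is non-constant on $X_m(\QQ_v)$ at at least one such place.
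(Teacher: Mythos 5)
Your ``if'' direction matches the paper and is fine, and your overall strategy for the converse (a Brauer--Manin obstruction to weak approximation coming from the quadratic extension $\QQ(\sqrt{m-4})$) is exactly the paper's strategy. But as written the converse is a plan, not a proof, and the two steps you leave open are precisely the substance of the argument. First, you never actually produce the obstructing class: you propose a corestriction from $\QQ(\sqrt{d})$ of a symbol in $(2u_1-u_2u_3)-2\sqrt{d}$ and $u_2-2$, concede that ``pinning down a symbol whose residues all vanish and which remains nontrivial in $\Br(X_m)/\Br(\QQ)$'' is the main obstacle, and do not verify either the residue computations or the nontriviality. This is not necessary work in any case: by \cite[Lem.~3.2]{LM21} the group $\Br X_m/\Br\QQ$ is already known to be generated by the quaternion symbol $\alpha=(u_i^2-4,\,m-4)$ over $\QQ$ itself (no corestriction needed), and the paper simply imports this; your identity $(2u_1-u_2u_3)^2=(u_2^2-4)(u_3^2-4)+4d$ is what makes the three choices of $i$ agree, but it does not by itself deliver an unramified nontrivial class.

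Second, the claim that ``non-squareness of $d$ ensures that $\inv_v\alpha$ is non-constant on $X_m(\QQ_v)$ at at least one such place'' is asserted as routine, but this is the hard local work. Since $\alpha\in\Br X_m$ and $X_m$ is proper, the invariant can only be non-constant at finitely many bad places, and one must exhibit explicit points realising both values $0$ and $1/2$ at a single place. The paper does this by a case split: if $m-4<0$ it uses the real place (Lemma~\ref{lem:inv-at-infinity} together with \eqref{eqn:alpha-alpha_i}); if $m-4>0$ and not a square it chooses a prime $p$ with $\valp(m-4)$ odd, gets a point with invariant $0$ from Proposition~\ref{prop:inv_p}, and gets a point with invariant $1/2$ from \cite[Prop.~5.5]{LM21} for $p>5$, \cite[Prop.~5.7]{LM21} for $p=3,5$, and \cite[Lem.~5.8]{LM21} for $p=2$. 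These Hensel-type constructions (with genuine complications at small primes and at $p=2$) are exactly what your sketch omits, so the proposal has a real gap: without them you have not shown that any adelic point is killed by the obstruction, only that one hopes so.
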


To set-up the notion of semi-integrality, let $D = X_m \setminus U_m$. Fixing weights $\omega_i \in \ZZ_{\ge 2} \cup \{\infty\}$ for the irreducible components $D_i = \{ x_0 = x_i = 0\}$, $i = 1, 2, 3$ of $D$ defines a $\QQ$-divisor $\Domega = \sum_{i = 1}^3(1 - 1/\omega_i)D_i$ that gives rise to a Campana orbifold structure $(X_m,\Domega)$. If $\sD_i$ is the Zariski closure of $D_i$ in $\sX_m$ for $i = 1, 2, 3$ and $\sDomega = \sum_{i = 1}^3(1 - 1/\omega_i)\sD_i$, the $\ZZ$-model $(\sX_m,\sDomega)$ of $(X_m,\Domega)$ carries the data of semi-integral points on the pair (Definition~\ref{def:semi-integral}). Selecting $\omega_1 = \omega_2 = \omega_3 = \infty$ or an empty boundary divisor recovers integral points on $\sU_m$ and rational points on $X_m$, respectively. Our results go beyond the classical setting and consider the case where at least one $\omega_i < \infty$. The discrepancy in the behaviour of semi-integral and rational points is observed in the next result. 

\begin{theorem}
  \label{thm:wa}
  The following hold.
  \begin{enumerate}[label=\emph{(\roman*)}]
    \item Assume that at least two of $\omega_1, \omega_2, \omega_3$ are finite. Then each pair $(\sX_m, \sDomega)$ fails weak weak approximation for semi-integral points.
    \item Assume that exactly one of $\omega_1, \omega_2, \omega_3$ is finite. If $m - 4 \in \QQ^{*2}$, then $(\sX_m, \sDomega)$ satisfies weak approximation. On the other hand, if $m - 4 \notin \QQ^{*2}$, then 
    \begin{enumerate}[label=\emph{(\alph*)}]
      \item $(\sX_m, \sDomega)$ fails Campana weak approximation; 
      \item $(\sX_m, \sDomega)$ fails Darmon weak approximation if the finite weight is even.
    \end{enumerate}
  \end{enumerate}
\end{theorem}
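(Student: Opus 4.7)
All four assertions rest on the following local computation: for $P \in X_m(\QQ_p)$, the affine coordinates $u_k = x_k/x_0$ satisfy, at a smooth point of the boundary divisor $D_i$, $v_p(u_i) = 0$ and $v_p(u_j) = -m_p(P,D_i)$ for $j \neq i$; at a vertex $D_i \cap D_j = \{x_0 = x_i = x_j = 0\}$, one has $v_p(u_i) = -m_p(P,D_j)$, $v_p(u_j) = -m_p(P,D_i)$, and $v_p(u_k) = -m_p(P,D_i) - m_p(P,D_j)$, where $\{i,j,k\} = \{1,2,3\}$. These identities follow from analysing the local equation of $X_m$ at each stratum: at a smooth point of $D_i$ a single coordinate is a local parameter for $D_i$; at a vertex two coordinates $z_i$, $z_j$ are local parameters while the third factors as $z_i z_j$ times a unit.

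For part~(i), assume without loss of generality that $\omega_1,\omega_2 < \infty$. Combining the local identities with the Darmon condition shows that, for any global Darmon point, $v_p(u_3)$ lies in $\gcd(\omega_1,\omega_2)\ZZ$ at every prime, and analogous constraints hold for $v_p(u_1)$ and $v_p(u_2)$. With $d = \gcd(\omega_1,\omega_2) \geq 2$, the coordinate $u_3$ is (up to sign and units) a global $d$-th power. For any finite $S$, choose a prime $p \notin S$ in an arithmetic progression that makes the image of $\pm(\QQ^*)^d$ strictly smaller than $\{w \in \QQ_p^* : v_p(w) \in d\ZZ\}$ modulo $(\QQ_p^*)^d$ (for instance $p \equiv 1 \pmod{4}$ when $d = 2$). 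A local Darmon point $P_p$ with $u_3(P_p)$ in a missed class then witnesses the failure of weak weak approximation. The analogous strategy handles all Darmon subcases by picking whichever of $u_1$, $u_2$, $u_3$ carries a nontrivial pairwise gcd constraint. For the Campana case, and for the remaining Darmon subcase of pairwise coprime weights, a direct coordinate-wise obstruction is insufficient; we plan instead to use a Brauer--Manin argument with a ramified Brauer element of $U_m$ whose local invariants become non-trivial after restriction to the orbifold conditions along $D_i$.

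For part~(ii), without loss of generality $\omega_3 < \infty$ and $\omega_1 = \omega_2 = \infty$. When $m - 4 \in \QQ^{*2}$, the variety $X_m$ is rational by Theorem~\ref{thm:wa-Xm}; we approximate any adelic semi-integral point by a global rational point via weak approximation on $X_m$, then apply a Vieta involution such as $(u_1,u_2,u_3) \mapsto (u_1,u_2,u_1 u_2 - u_3)$ together with the $\QQ$-rational parametrisation to remove any unwanted intersection with $D_3$ at auxiliary primes. The fact that only $D_3$ has a finite weight is crucial: it leaves two other directions along which we may move freely. When $m - 4 \notin \QQ^{*2}$, we use the quaternion Brauer element $\alpha = (m - 4, f) \in \Br(\QQ(X_m))$ (for a suitable rational function $f$) that detects the failure of weak approximation on $X_m(\QQ)$. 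For (ii)(a), we exhibit Campana adelic points on which $\sum_v \inv_v \alpha$ takes different values, using the flexibility that the Campana condition along $D_3$ does not fix the relevant local invariants. For (ii)(b), the local invariant of $\alpha$ at primes approaching $D_3$ is controlled by the parity of $v_p(u_3)$; the Darmon condition preserves variation in this parity precisely when $\omega_3$ is even, which is why the statement is restricted to even weights.

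The main obstacle is identifying the right ramified Brauer element for the Campana part of (i) and the pairwise-coprime Darmon subcase, and then verifying that its local invariants really do take multiple values on orbifold adelic points. The Brauer computations in (ii)(a) and (ii)(b) should amount to a careful refinement of the rational-point analysis of \cite{LM21}.
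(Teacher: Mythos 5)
Your plan for part (i) has the most serious gap. First, the mechanism you propose does not work even in the Darmon subcase you treat: the orbifold condition only constrains the intersection multiplicities with the boundary, i.e.\ the \emph{negative} part of $v_p(u_3)$; at primes where the point does not meet the boundary, $v_p(u_3)\ge 0$ is completely unconstrained, so $u_3$ is not ``up to sign and units a global $d$-th power'' --- only its denominator is constrained, and the numerator can realise every class of $\QQ_p^*/(\QQ_p^*)^d$ near any prescribed local point, so the ``missed class'' obstruction evaporates. Second, you explicitly leave the Campana case and the pairwise-coprime Darmon case open, and these are not minor subcases but the bulk of the statement. The paper proves (i) for all weights and both semi-integral notions at once by a much softer argument that your plan misses entirely (Theorem~\ref{thm:wwa-general-pairs}): in the adelic space of \cite{MNS22} there is a \emph{non-strict} component consisting of local points on the $\sD_{i,\fin}$, and when two components $D_i, D_j$ have finite weight one can choose an adelic point with infinitely many coordinates on $\sD_{i,\fin}\setminus\bigcup_{k\neq i}\sD_{k,\fin}$ and infinitely many on $\sD_{j,\fin}\setminus\bigcup_{k\neq j}\sD_{k,\fin}$ (Lang--Weil plus Hensel). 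Any global point approximating it off a finite $T$ would have to be non-strict, hence lie on a single boundary component, and so cannot be close to both kinds of coordinates; no Brauer class, power condition, or coordinate computation is needed. This structural use of the non-strict adeles is the missing idea.

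Part (ii) is closer in spirit to the paper but remains a sketch at the decisive points. For $m-4\notin\QQ^{*2}$ the paper uses the ramified class $\alpha_{1,-}=(u_1-2,\,m-4)\in\Br U_m$ rather than the unramified $\alpha$, and the step your outline does not supply is why an element of $\Br U_m$ obstructs \emph{semi-integral} weak approximation at all: one must check $\alpha_{1,-}\in\Br(X_m\setminus D_{\inf})$ for the Campana case and $\alpha_{1,-}\in\Br(X_m,\Domega)$ for the Darmon case --- the latter holds exactly when the finite weight is even, which is the true source of the parity hypothesis in (ii)(b) (Lemma~\ref{lem:alpha_i-Br-X-D}), and then one invokes \cite[Prop.~3.19(ii)]{MNS22} together with the explicit surjectivity of the local invariant on strict semi-integral points at a place dividing $m-4$ to odd order or at the real place (Lemma~\ref{lem:inv-at-infinity}, Proposition~\ref{prop:inv_p}, Proposition~\ref{prop:BM-set-alpha_i}). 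Your heuristic about the parity of $v_p(u_3)$ points in the right direction but is not a substitute for these verifications. For $m-4\in\QQ^{*2}$ your Vieta-involution repair at auxiliary primes is a reasonable instinct (the paper argues directly from rationality of $X_m$, weak approximation for $U_m$, and openness of the semi-integral conditions), but as written it is a plan rather than a proof and would still need the ``remove unwanted intersections'' step made precise.
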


Theorem~\ref{thm:wa}(i) follows from a result about weak approximation for general orbifold pairs, namely Theorem~\ref{thm:wwa-general-pairs}. To the best of our knowledge this is the first result on local-global principles for Campana orbifolds with multiple irreducible components of the boundary. 

\begin{remark}
  \label{rem:adeles}
  Currently, there is no uniform notion for the semi-integral adelic space across the literature. We follow the convention of \cite{MNS22}. The proofs of Theorems~\ref{thm:wa} and \ref{thm:wwa-general-pairs} do not adapt to the other notions of semi-integral adeles, e.g the one given in \cite{pieropan2021}.
\end{remark}

Theorem~\ref{thm:wa} implies that, because of the boundary, strong approximation off any finite set fails if at least two of $\omega_1, \omega_2, \omega_3$ are finite. The next result exhibits this failure through semi-integral points on $U_m$ in the case of an arbitrary number of finite weights.

\begin{theorem}
  \label{thm:sa}
  Let $S(m) = \{p \in \Omega_{\QQ} \ : \ m - 4 \in \Qp^{*2} \}$. If $m - 4 \notin \QQ^{*2}$ and some $\omega_i$, $i = 1, 2, 3$ is finite, then $(\sX_m, \sDomega)$ fails strong approximation off any finite set $T \subset S(m)$.
\end{theorem}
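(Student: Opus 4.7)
The plan is to reduce the failure of strong approximation off $T$ to a failure of weak approximation at a finite set of places disjoint from $T$, and then to locate such a failure using Theorem~\ref{thm:wa} combined with a Chebotarev density argument.

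First, strong approximation off $T$ for $(\sX_m,\sDomega)$ implies weak approximation for semi-integral points at every finite set $S$ of places with $S \cap T = \emptyset$: global semi-integral points would be dense in the adelic semi-integral points away from $T$ under the restricted product topology, and projecting to any such $S$ would yield density in the product of local semi-integral points. It therefore suffices to exhibit a finite $S$ disjoint from $T$ at which weak approximation for semi-integral points fails.

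When at least two of $\omega_1,\omega_2,\omega_3$ are finite, Theorem~\ref{thm:wa}(i) provides this at once: weak weak approximation fails, meaning weak approximation off $S_0$ fails for every finite set of places $S_0$, and specialising $S_0 = T$ gives the desired $S$. When exactly one $\omega_i$ is finite, Theorem~\ref{thm:wa}(ii) gives failure of weak approximation, and its proof should exhibit the obstruction locally at primes $p$ with $m - 4 \notin \Qp^{*2}$, that is, at primes $p \notin S(m)$. Since $m - 4 \notin \QQ^{*2}$, Chebotarev density applied to the quadratic extension $\QQ(\sqrt{m-4})/\QQ$ produces infinitely many such primes, so one can pick $p_0 \notin T$, recalling $T \subseteq S(m)$. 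The local obstruction at $p_0$ then yields weak approximation failure at $S = \{p_0\}$, as required.

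The principal obstacle is to verify that the obstructions from the proof of Theorem~\ref{thm:wa}(ii) can be realised at an arbitrary prime $p_0$ outside $S(m)$, rather than at some distinguished finite set; once this localisability is confirmed, the Chebotarev step shifts the obstruction away from $T$ with no further effort. A subsidiary point is the Darmon case with a single odd finite weight, not directly covered by Theorem~\ref{thm:wa}(ii)(b): here a direct construction of a local obstruction at a prime $p_0 \notin S(m) \cup T$ is needed, leveraging the non-split behaviour of the boundary divisor modulo $p_0$ in the same way as the Campana argument, but adapted to the Darmon divisibility condition on the coordinates.
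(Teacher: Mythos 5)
Your reduction of strong approximation off $T$ to failure of weak approximation at a finite set $S'$ disjoint from $T$ is sound, and your disposal of the case of at least two finite weights via Theorem~\ref{thm:wa}(i) works (though it is a different route from the paper, which does not split into cases). But the single-finite-weight case has two genuine gaps and a misdirected step.

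First, the Chebotarev step is addressing a non-issue. Proposition~\ref{prop:BM-set-alpha_i} already produces the obstruction at $\infty$ (if $m - 4 < 0$) or at a prime $q$ dividing $m - 4$ to odd multiplicity; both places automatically satisfy $m - 4 \notin \QQ_v^{*2}$, so they lie outside $S(m)$ and hence outside $T$ by hypothesis. There is nothing to ``shift.'' Moreover, you cannot in general realise the obstruction at an arbitrary prime $p_0$ with $m - 4 \notin \QQ_{p_0}^{*2}$: Proposition~\ref{prop:inv_p} only provides a point with $\inv_{p_0} \alpha_{i,-} = 1/2$ when $p_0$ divides $m - 4$ to odd multiplicity, and at a prime of good reduction where $m - 4$ is merely a nonsquare unit, the Brauer class $\alpha_{i,-}$ extends over the integral model and its local invariant is constant on strict $\ZZ_{p_0}$-points, so surjectivity fails.

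Second, the inference ``local obstruction at $p_0$ yields weak approximation failure at $S = \{p_0\}$'' is not justified. Non-constancy of $\inv_{p_0} \alpha_{i,-}$ on local points does not by itself force global points into a proper closed subset of $(\sX_m, \sDomega)_{\str}^*(\ZZ_{p_0})$; the reciprocity law constrains only the \emph{sum} of invariants, and the contributions at other places (notably $\infty$ and the other primes dividing $m - 4$ to odd order) may vary to compensate. What the paper's argument actually uses is the complementary observation you are missing: for every $v \in S(m)$, $m - 4 \in \QQ_v^{*2}$ makes $\Br(U_m \times_{\QQ} \QQ_v)/\Br \QQ_v$ trivial, so $\inv_v \alpha_{i,-}$ vanishes identically on $(\sX_m, \sDomega)_{\str}^*(\sO_v)$. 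Because $T \subset S(m)$, the Brauer--Manin sum is determined entirely by the places away from $T$, so the strict inclusion $(\sX_m, \sDomega)_{\str}^*(\AA_\QQ)^{\alpha_{i,-}} \subsetneq (\sX_m, \sDomega)_{\str}^*(\AA_\QQ)$ from Proposition~\ref{prop:BM-set-alpha_i} survives projection to the $T$-adeles, and since the Brauer--Manin set is closed in the adelic topology, strong approximation off $T$ fails directly. This argument requires neither weight-parity hypotheses nor a reduction to weak approximation, so it also handles the Darmon case with odd finite weight that you flag as open.
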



\begin{remark} 
  \label{rem:hp}
  Assume that not all of $\omega_1, \omega_2, \omega_3$ are infinite. Then the family of orbifold pairs $(\sX_m, \sDomega)$ satisfies the semi-integral Hasse principle as $(0:0:1:1) \in X_m(\QQ)$ is semi-integral on $(\sX_m,\sDomega)$ by Proposition~\ref{prop:semi-integral-criterion}. Thus for any choice of weights neither Campana nor Darmon points on $(\sX_m, \sDomega)$ behave in the way that $\sU_m(\ZZ)$ does, as $\sU_m(\ZZ)$ is infinitely often empty without local obstruction to that as shown in \cite{Ghosh2017IntegralPO}, \cite{LM21} and \cite{colxu_2020}.
\end{remark}

\subsection*{Strict points}
\label{subsec:strict}
Integral points are better approximated by strict semi-integral points $(\sX_m, \sDomega)_{\str}^*(\ZZ) = (\sX_m, \sDomega)^*(\ZZ) \cap U_m(\QQ)$ (Definitions~\ref{def:semi-integral} and \ref{def:str-nstr}), where $\ast = \cc$ to indicate Campana points or $\ast = \dd$ for Darmon points. For example $(\sX_m, \sDomega)_{\str}^*(\ZZ)$ detects failures of strong approximation featuring in the integral case \cite[Thm.~1.1]{LM21}, e.g. Theorem~\ref{thm:sa}. 

A powerful tool to study local-global principles is the Brauer--Manin obstruction. Its semi-integral version has been defined in \cite[\S3]{MNS22} along with its relation to its rational and integral counterparts. As is evident from Remark~\ref{rem:hp} such obstructions to the Hasse principle are not present in the family $(\sX_m, \sDomega)$ as its non-strict part vanishes. We perform a complete analysis of the values of the local invariant maps for the relevant Brauer elements in \S\ref{sec:Markoff} to examine this obstruction for strict semi-integral points.

\begin{theorem}
  \label{thm:BMO-hp}
  Assume that $\omega_i < \infty$ for some $i = 1, 2, 3$. Then no algebraic Brauer--Manin obstruction to the Hasse principle for strict points on $(\sX_m, \sDomega)$ is present. Moreover, if
  \begin{equation}
    \label{eqn:star}
    \tag{$\ast$}
    -(m - 4) \in \QQ^2 \implies \frac{\sqrt{m} + \sqrt{m - 4}}{2} \notin \QQ(\sqrt{m}, \sqrt{m - 4})^2
  \end{equation} 
  is met, then there is no Brauer--Manin obstruction to the Hasse principle for strict semi-integral points on $(\sX_m, \sDomega)$.
\end{theorem}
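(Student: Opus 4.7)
The proof splits along the two claims in the statement. For the first, the key input is the complete description of the algebraic Brauer group $\Br_1(X_m)/\Br_0$ and the tabulation of all local invariants $\inv_v(\alpha(P_v))$ carried out in Section~\ref{sec:Markoff}. I expect $\Br_1(X_m)/\Br_0$ to be generated by a handful of quaternion classes $\alpha_j$ of the shape $(f_j,\, m-4)$, where the $f_j$ are explicit rational functions in the Markoff coordinates and the relations between them come from the Markoff equation itself. Granting this, to exclude an algebraic Brauer--Manin obstruction to the Hasse principle for strict points it suffices to show that, for every strict semi-integral adelic point $(P_v)$ and every generator $\alpha_j$, the pairing $\sum_v \inv_v(\alpha_j(P_v))$ vanishes. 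The plan is to take the $\QQ$-rational boundary point $P_0=(0:0:1:1)$ of Remark~\ref{rem:hp} as a benchmark, for which global reciprocity gives $\sum_v \inv_v(\alpha_j(P_0))=0$, and then to argue place by place that $\inv_v(\alpha_j(P_v))=\inv_v(\alpha_j(P_0))$ for every strict semi-integral local point $P_v$.

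The place-by-place comparison is precisely the content of the invariant tables in Section~\ref{sec:Markoff}. Outside a finite set $S$ of places containing $\infty$ and the primes dividing $2m(m-4)$, both local invariants vanish by smooth good-reduction and Hensel arguments. For $v \in S$, the strictness condition $P_v\in U_m(\QQ_v)$ together with the Campana or Darmon weight condition at any boundary component $\sD_i$ with $\omega_i<\infty$ constrains the $v$-adic valuations of the coordinates of $P_v$ enough to pin down the Hilbert symbol $(f_j(P_v),\, m-4)_v$. The relevance of the finite-weight hypothesis is already visible here: without it, local points could drift freely towards the boundary and produce arbitrary Hilbert symbols.

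For the "in particular" assertion I would handle the transcendental Brauer group $\Br(X_m)/\Br_1(X_m)$ separately. The shape of the hypothesis $(\ast)$ strongly suggests that a non-trivial transcendental class $\beta$ can exist only under the degeneracy $-(m-4)\in\QQ^{*2}$; in that case $\beta$ arises as a corestriction from the biquadratic field $K=\QQ(\sqrt m,\sqrt{m-4})$ of a quaternion symbol whose first entry is $(\sqrt m+\sqrt{m-4})/2$, and condition $(\ast)$ is precisely the non-square condition ensuring that $\beta$ defines a non-trivial class over $K$. One then repeats the reciprocity-plus-local-table argument over $K$, using corestriction to reduce the adelic Brauer pairing on the strict semi-integral points of $(\sX_m,\sDomega)$ to one over $K$, and concludes the vanishing of the full Brauer pairing.

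The main obstacle I anticipate is the bad-place local invariant analysis at $p=2$ and at primes $p \mid m-4$, where the Hilbert symbol $(\cdot,\, m-4)_p$ is genuinely non-trivial. There one has to verify, case by case on the valuation patterns $(v_p(x_1),v_p(x_2),v_p(x_3))$ compatible with the Markoff equation and the prescribed weights, that the invariant matches its value at $P_0$; this is precisely the book-keeping that the tables of Section~\ref{sec:Markoff} are designed to carry out, and once it is in place the rest of the argument reduces to a clean application of global reciprocity.
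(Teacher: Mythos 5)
Your plan for the algebraic part rests on a step that is false for these pairs, and it is far stronger than what the statement needs. You propose to prove that $\inv_v(\alpha_j(P_v))$ agrees with a benchmark value at \emph{every} strict semi-integral local point, so that \emph{every} strict adelic point pairs to zero with the generators. But the invariant maps of the generators $\alpha_{i,-}=(x_i/x_0-2,\,m-4)$ are genuinely non-constant on strict local points at the bad places: Lemma~\ref{lem:inv-at-infinity}(ii) shows surjectivity onto $\{0,1/2\}$ at $\infty$ when $m<4$, and Proposition~\ref{prop:inv_p} shows the same at finite primes dividing $m-4$ to odd multiplicity. This surjectivity is precisely what Proposition~\ref{prop:BM-set-alpha_i} uses to prove $(\sX_m, \sDomega)_{\str}^*(\AA_\QQ)^{\alpha_{i, -}} \subsetneq (\sX_m, \sDomega)_{\str}^*(\AA_\QQ)$, i.e.\ there \emph{are} strict adelic points whose total pairing is $1/2$ --- that is how the paper derives the failures of weak and strong approximation in Theorems~\ref{thm:wa} and~\ref{thm:sa}. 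So your place-by-place ``pinning down'' of the Hilbert symbols from the valuation pattern cannot succeed at $p\mid m-4$ (nor at $\infty$ for $m<4$). Absence of an obstruction to the Hasse principle only requires exhibiting \emph{one} adelic point in the Brauer--Manin set, and this is what the paper does: by Proposition~\ref{prop:inv_p} one can choose, at each finite prime, a strict semi-integral $M_p$ with $\inv_p\alpha_{i,-}(M_p)=0$ simultaneously for $i=1,2,3$ (using the finite weight to move into $(\sX_m,\sDomega)_{\str}^*(\Zp)\setminus\sU_m(\Zp)$ when needed), and similarly at $\infty$, so all local invariants of the resulting adele vanish for every algebraic generator. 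Note also that your benchmark $P_0=(0:0:1:1)$ lies on $D_1$, hence is not strict, and classes of $\Br U_m$ cannot simply be evaluated there.

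For the ``in particular'' clause you have the role of \eqref{eqn:star} inverted. In the paper, \eqref{eqn:star} is exactly the hypothesis under which the transcendental Brauer group $\Br U_m/\Br_1 U_m$ is \emph{trivial} (Cor.~4.3 of \cite{LM21}), so the full statement follows at once from the algebraic case; there is no transcendental class to pair against. Your outline instead reads \eqref{eqn:star} as guaranteeing a nontrivial transcendental class obtained by corestriction from $\QQ(\sqrt m,\sqrt{m-4})$ and proposes a reciprocity argument over that field; besides resting on this misreading, it would require local invariant computations for a transcendental class that neither the paper nor your sketch provides.
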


The condition \eqref{eqn:star} is not very restrictive on $m$, in fact $\#\{|m| \le B \ : \ \text{\eqref{eqn:star} fails} \} = O(B^{1/4})$ by \cite[Prop~5.2]{LM21}.

The lack of a Brauer--Manin obstruction does not necessarily imply the existence of strict semi-integral points. By \cite[Thm~1.2(ii) and Prop.~6.1]{Ghosh2017IntegralPO} asymptotically $7/12$ of the total number of Markoff orbifold pairs trivially have a strict semi-integral point, as $\sU_m(\ZZ) \neq \emptyset$. For the remaining $5/12$ values of $m$ we have $\sU_m(\ZZ) = \emptyset$ because no $2$-adic or $3$-adic integral points exist. However, it may still be the case that there are global semi-integral points, as explained in the paragraph after Question~1.7 in \cite{MNS22}. It is natural to focus on those remaining $m$ with empty integral adeles. For any real $B \ge 1$ let 
\[
 N(B) = \# \{ |m| \le B \ : \ \sU_m(\AA_\ZZ) = \emptyset \text{ but } (\sX_m,\sDomega)_{\str}^*(\ZZ) \neq \emptyset \}.
\]

A similar quantity was first examined for quadratic orbifold pairs in \cite{MNS22}, leading to the best known lower bound for the number of Hasse failures for integral points on affine diagonal quadrics. The problem of understanding how often local-global principle hold in families has garnered a lot of attention lately. Several papers \cite{BBL16}, \cite{LRS22}, \cite{LM24} explore it for rational points in general families of algebraic varieties. In \cite{BB14a}, \cite{Rom19} Châtelet surfaces are analysed, \cite{BB14b} targets coflasque tori, \cite{BL20} studies Erdős-Straus surfaces, \cite{MS22} deals with quartic del Pezzo surfaces and \cite{GLN22}, \cite{San23} focuses on certain classes of K3 surfaces.

Local-global principles for integral points in families are much less understood. Well-known birational invariants of smooth proper varieties are non-invariant for affine varieties. The set of integral points depends on the choice of integral model, and no general theory predicts Hasse failures \cite[Prop~5.20]{LM21}. Concrete families are studied by \cite{Dic66}, Mordell \cite{Mor42} and Heath-Brown \cite{HB92} who examine sums of three cubes. As for Markoff surfaces, obstructions to the existence of integral points have been found for other families of log K3s such as affine cubic surfaces \cite{CTW12}, \cite{U23}, \cite{LMU23} and affine del Pezzo surfaces of degree $4$ and $5$ \cite{JS17}, \cite{L23}. Few other investigations of degree 2 and 3 surfaces feature in \cite{Mit17}, \cite{San22}, \cite{Wan21}.

Clearly, $N(B) = O(B)$. We use the theory of quadratic forms and the explicit description of strict semi-integral points in Proposition~\ref{prop:semi-integral-criterion-2-inf} to deduce an almost sharp lower bound for the number of pairs which non-trivially satisfy the strict semi-integral Hasse principle.

\begin{theorem}
  \label{thm:count}
  Assume that one of $\omega_i$ is finite. Then 
  \[
    N(B) \gg \frac{B}{(\log B)^{1/2}}.
  \]
\end{theorem}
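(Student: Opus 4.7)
\emph{Proof plan.} Assume without loss of generality that the unique finite weight is $\omega_3$; the other configurations are symmetric. The strategy will be to combine Proposition~\ref{prop:semi-integral-criterion-2-inf}, which gives an explicit criterion for strict semi-integral points in terms of primitive projective coordinates, with the classical Landau--Ramanujan count of integers representable by a positive definite binary quadratic form in an arithmetic progression.

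To produce a convenient family of strict semi-integral points, fix the slice $u_3 = 1/2$ and restrict to $u_1 = 2u_1'$ with $u_1' \in \ZZ$. Substituting into~\eqref{eqn:Um} and solving the resulting quadratic in $u_2$, a short computation gives $u_2 = (u_1' \pm s')/2$, which lies in $\QQ$ if and only if
\[
  4m - 1 = (s')^2 + 15(u_1')^2 \quad \text{for some } s' \in \ZZ,
\]
i.e., $4m - 1$ is representable by the positive definite principal form $x^2 + 15 y^2$ of discriminant $-60$. For each such representation the projective point $P = (2 : 4u_1' : u_1' + s' : 1) \in U_m(\QQ)$ is primitive, and by Proposition~\ref{prop:semi-integral-criterion-2-inf} it will be a strict semi-integral point on $(\sX_m, \sDomega)$: the condition at $D_3$ is vacuous because $\gcd(2, 1) = 1$, while the conditions at $D_1, D_2$ are absent since $\omega_1 = \omega_2 = \infty$.

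Next, restrict $m$ to the arithmetic progression $m \equiv 3 \pmod 4$. A direct mod-$4$ analysis of~\eqref{eqn:Um} (as in \cite{Ghosh2017IntegralPO}, \cite{LM21}) shows $\sU_m(\ZZ_2) = \emptyset$, and hence $\sU_m(\AA_\ZZ) = \emptyset$, for every such $m$. One then checks compatibility: $4m - 1 \equiv 11 \pmod{16}$, and a short enumeration confirms $11$ lies in the image of $x^2 + 15 y^2 \pmod{16}$ (for instance, via $(s', u_1') = (2, 3)$). Applying the Landau--Ramanujan theorem in arithmetic progressions, the number of integers $n \le 4B - 1$ with $n \equiv 11 \pmod{16}$ and $n = (s')^2 + 15(u_1')^2$ is $\gg B/(\log B)^{1/2}$, and each contributes a distinct $m \le B$ to $N(B)$, yielding the claimed lower bound.

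The main delicacy will be the joint compatibility analysis: verifying that the congruence from the bad $2$-adic locus, the primitivity constraints on the projective presentation of the strict semi-integral point, and the local representability of $4m - 1$ by $x^2 + 15 y^2$ all cohabit a single residue class of positive density, so that the Landau--Ramanujan asymptotic is not annihilated by the imposed congruences.
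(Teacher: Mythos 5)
Your construction does not produce semi-integral points, so the lower bound does not follow. The point $P=(2:4u_1':u_1'+s':1)$ is primitive, so by Proposition~\ref{prop:semi-integral-criterion} its intersection multiplicity with $\sD_1$ at $p=2$ is $\min\{v_2(2),\,v_2(4u_1')\}=1$. Since you put $\omega_1=\omega_2=\infty$, Definition~\ref{def:semi-integral} requires $P\in(\sX_m\setminus\sD_{\inf})(\ZZ_2)$, i.e.\ multiplicity $0$ along $\sD_1$; your assertion that ``the conditions at $D_1,D_2$ are absent since $\omega_1=\omega_2=\infty$'' inverts the definition --- infinite weight imposes the strictest (integrality) condition, not no condition. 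The failure is structural rather than a matter of relabelling which index carries the finite weight: once $m\equiv 3\bmod 4$ is imposed so that $\sU_m(\ZZ_2)=\emptyset$, every strict semi-integral point is non-integral at $2$, and Proposition~\ref{prop:semi-integral-criterion-2-inf}(iii) combined with the admissibility condition at the finite-weight component forces the two infinite-weight coordinates to be $2$-adic units and $v_2(x_0)=v_2(x_3)-1$ with $v_2(x_0)\ge\omega_3\ge 2$. Your points have $v_2(x_0)=1$ and the infinite-weight coordinate $x_1=4u_1'$ divisible by $4$, so they are neither Campana nor Darmon points for any admissible weights; equivalently, the slice $u_3=1/2$ (denominator exactly $2$) is incompatible with the very $2$-adic obstruction you use to empty $\sU_m(\AA_\ZZ)$. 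The Landau--Ramanujan count of $4m-1=s'^2+15u_1'^2$ therefore counts the wrong set, even though it has the right order of magnitude.

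For comparison, the paper works at $p=3$: it takes $m=6d$ with $d\equiv 29\bmod 78$, so $m\equiv 3\bmod 9$ and $\sU_m(\ZZ_3)=\emptyset$, and seeks points of the shape $(3^{\omega_1},\,11\cdot 3^{\omega_1},\,y,\,z)$ with $3\nmid yz$, whose semi-integrality is immediate from Proposition~\ref{prop:semi-integral-criterion} because the denominator $3^{\omega_1}$ is tuned to the finite weight (Proposition~\ref{prop:st-semi-integral-points}). Their existence reduces to a primitive representation of $(m-121)3^{2\omega_1}$ by $y^2-11yz+z^2$ (discriminant $117$, one class per genus), governed by congruence conditions on the prime divisors of $m-121$ modulo $13$, and the count of admissible $m$ is done with characters mod $78$ plus a Selberg--Delange/Perron argument, yielding the $B/(\log B)^{1/2}$ bound. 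If you wish to salvage a $2$-adic route, you would need points with $x_0$ exactly divisible by $2^{\omega_3}$, $x_3$ by $2^{\omega_3+1}$ and $x_1,x_2$ odd, leading to a representation problem whose target depends on $\omega_3$ --- substantially more delicate than the fixed-denominator slice you propose.
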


The quantification of $N(B)$ provides a solid step forward in the direction of understanding failures of local-global principles for strict and general semi-integral points. The proof of Theorem~\ref{thm:count} is illustrative in the sense that we construct an explicit subfamily of Markoff orbifold pairs with strict semi-integral points but no integral adelic points. Two other such families are given in Theorems~\ref{thm:existencepcases} and \ref{thm:existence-CTXW}. We hope that a general theory explaining the failures of semi-integral local-global principles may serve as a remedy for the lack of understanding how integral points behave in families.

\subsection*{Outline} 
In \S\ref{sec:background} we recall the notion of semi-integrality along with local-global principles for semi-integral points and the Brauer--Manin obstruction to them. We then turn attention in \S\ref{sec:Markoff} to the particular case of interest in this work, namely Markoff orbifold pairs, and translate the existing theory to that setting. We conclude \S\ref{sec:Markoff} with the proofs of the qualitative results in Theorems~\ref{thm:wa-Xm}, \ref{thm:wa}, \ref{thm:sa} and \ref{thm:BMO-hp}. Finally, \S\ref{sec:quad-forms} is dedicated to the existence result in Proposition~\ref{prop:st-semi-integral-points}, the proof of the counting problem set in Theorem~\ref{thm:count} and the exhibition of other subfamilies of Markoff orbifold pairs with strict semi-integral points but no integral points.

\subsection*{Notation}
We write $k$ for a number field, $\sO_k$ for its ring of integers and $\Omega_k$ for the set of places of $k$. We fix $\Omega_\infty \subset \Omega_k$ for the set of all Archimedean places. If $S \subset \Omega_k$ is a finite subset containing $\Omega_\infty$, $\sO_S$ stands for the ring of $S$-integers of $k$, recovering $\sO_k$ if $S = \Omega_\infty$.

We shall use $U$ for a smooth affine variety over $k$ (a separated scheme of finite type), $X$ will be a smooth projective variety and $D$ a divisor on $X$. If $X$ is the compactification of $U$ inside the projective space, $D$ will be its boundary divisor so that $U = X \setminus D$. Integral models of $X$, $U$, $D$ over $\sO_S$ will be denoted by $\sX$, $\sU$, $\sD$. 

Whenever orbifold pairs related to Markoff surfaces are under discussion, $m \in \ZZ \setminus \{0, 4\}$ will be assumed and $\uomega = (\omega_1,\omega_2,\omega_3)$ with $\omega_i \in \ZZ_{\ge 2} \cup \{\infty\}$, $i = 1, 2, 3$ will be fixed. The notation $X_m$, $U_m$, $D_i$ and $\Domega$ defined in the introduction and $\sX_m$, $\sU_m$, $\sD_i$ and $\sDomega$ for their integral models over $\ZZ$, respectively, as in the introduction will be in force.

\begin{acknowledgements}
  We would like to thank Ulrich Derenthal and Florian Wilsch for several fruitful discussions, Francesco Campagna for improvements to section \ref{sec:quad-forms}, and Daniel Loughran and Marta Pieropan for helpful comments. We thank the anonymous referee for their careful reading and helpful suggestions and comments that have improved this paper. While working on this paper the first author was supported by a Horizon Europe MSCA postdoctoral fellowship 101151205 -- GIANT funded by the European Union, by scientific program ``Enhancing the Research Capacity in Mathematical Sciences (PIKOM)'', No. DO1-67/05.05.2022 of the Ministry of Education and Science of Bulgaria, and by grant DE 1646/4-2 of the Deutsche Forschungsgemeinschaft. The second named author is supported by the NWO grant VI.Vidi.213.019. For the purpose of open access, a CC BY public copyright license is applied to any Author Accepted Manuscript version arising from this submission.
\end{acknowledgements}

\section{Local-global principles for semi-integral points}
\label{sec:background}
In this section, we recall the notions of orbifold pairs defined by Campana \cite{Cam04}, \cite{Cam05}, \cite{Cam11a}, \cite{Cam11b}, \cite{Cam15}, semi-integral points, semi-integral local-global principles and their Brauer--Manin obstruction as defined in \cite{MNS22}. 

\begin{definition}
  A (Campana) orbifold or an orbifold pair over $k$ is a pair $(X, \Domega)$, where $X$ is a smooth proper variety over $k$ and $\Domega$ is a $k$-rational $\QQ$-divisor on $X$ of the shape
  \[
  \Domega = \sum_{i\in I}\left(1-\frac{1}{\omega_i}\right)D_i.
  \]
  Here $I$ is a finite (possibly empty) indexing set of distinct irreducible effective (Weil) divisors $D_i$ of $X$, $\uomega = (\omega_i)_{i \in I}$ such that $\omega_i \in \ZZ_{\ge 2} \cup \{\infty\}$. We call $\omega_i$ the weight of $D_i$. The support of $\Domega$ is $D_{\red} = \cup_{i \in I} D_i$. We say that $(X, D)$ is smooth if $D_{\red}$ is a strict normal crossings divisor.
\end{definition}
\begin{remark}
  Note that, in comparison with \cite{MNS22}, we require a finite indexing set of irreducible divisors for $\Domega$ and so instead of $\omega_i \ge 1$, we have $\omega_i \ge 2$. This is only a cosmetic change. The notation in \cite{MNS22} was motivated from the general theory viewpoint and by the assumption that one starts with a projective variety $X$ and selects a boundary divisor $D$ on it. The notion of integrality depends on the boundary divisor. The current set-up is more suitable for generalising questions about integral points on affine varieties, in which case the boundary has to stay fixed but the weights may vary.
\end{remark}

\begin{definition}
 Let $(X, \Domega)$ be an orbifold pair. Fix a finite set $S \subset \Omega_k$ containing $\Omega_{\infty}$. Consider a flat, proper scheme $\mathcal X$ of finite type over $\sO_S$, with an isomorphism $\sX_{(0)} \cong X$ for the generic fibre of $\sX$. Denote by $\sDomega$ the divisor $\sum_{i\in I}(1-1/\omega_i)\mathcal D_i$, where $\sD_i$ for each $i \in I$ is the Zariski closure of $D_i$ in $\sX$ along that isomorphism. The pair $(\mathcal X,\sD_{\uomega})$ is called an \emph{$\sO_S$-model} of $(X,D_{\underline \omega})$.
\end{definition}

\begin{remark}
 Notice that, in contrast to the construction given by the authors in \cite{pieropan2021}, we also allow $\mathcal X$ to be irregular. Allowing irregularity gives us more freedom in choosing the set $S$ of finite places which define our $\sO_S$-model. This also turned out to be useful in the setting of counting Campana points of bounded height, as discussed in \cite[\S3]{pieropan2023}.
\end{remark}

\subsection{Semi-integral points}
Let $(X,\Domega)$ be an orbifold pair with a $\sO_S$-model $(\sX,\sDomega)$. Any $M \in X(k)$ lifts to a point $M_v \in \sX(\sO_v)$ for all $v \in \Omega_K \setminus S$ by the valuative criterion for properness \cite[Theorem 7.3.8.]{EGAII}.
For any $\sD_i \subset \sX$, we want to quantify the position of $M_v \in \sX(\sO_v)$ with respect to $\sD_i$ at all places $v \in \Omega_K \setminus S$, by assigning it a value in the following way. Consider the fibre product of the two morphisms given by the closed immersion $\sD_i \longrightarrow \sX$ and by the local point $M_v: \spec(\sO_v) \longrightarrow \sX$.
Since closed immersions are stable under base change \cite[3.11.a]{hartshorne_1977} we know that $\spec(\sO_v) \times_{\sX} \sD_i$ is a closed subscheme of $\spec(\sO_v)$. Every closed subscheme of $\spec(\sO_v)$ is of the shape $\spec(\sO_v/I)$ for an ideal $I \subset \sO_v$. As $\sO_v$ is a discrete valuation ring with uniformiser $\pi_v$, say, every non-zero ideal is of the shape $I = (\pi_v^n)$ for some $n \in \ZZ_{\ge 0}$.

\begin{definition} \label{def:intersectionmult}
  Let $(X, \Domega)$ be an orbifold pair with a $\sO_S$-model $(\sX,\sDomega)$. If $M_v \in \sX(\sO_v)$ with $v \in \Omega_k \setminus S$, define 
  \[
    n_v(\sD_i, M_v) \coloneq
    \begin{cases}
       \infty &\text{if } I=(0) \text{ (or equivalently, } M_v \in D_i(k_v)),\\
       n &\text{if } I = (\pi_v^n).
    \end{cases} 
  \]
  We call $n_v(\sD_i, M_v)$ the \emph{intersection multiplicity} of $M_v$ with $\sD_i$.
\end{definition}

As we shall soon see, if $\omega_i = \infty$ for some $D_i$ in $D_{\red}$, any semi-integral point must be away from that $D_i$, i.e. its intersection multiplicity with $\sD_i$ must be 0. This motivates the need of the following definition. 

\begin{definition}
  Set $D_{\inf} := \bigcup_{\omega_i = \infty} D_i$ and $\sD_{\inf} := \bigcup_{\omega_i = \infty} \sD_i$. Analogously, for each $i \in I$, set $D_{i, \inf}:= D_i \cap D_{\inf}$ and $\sD_{i ,\inf}:= \sD_i \cap \sD_{\inf}$ as well as $D_{i,\fin} := D_i \setminus D_{i, \inf}$ and $\sD_{i, \fin}:= \sD_i \setminus \sD_{i,\inf}$. Finally, set $D_{\fin} := D_{\red} \setminus D_{\inf}$ and $\sD_{\fin} := \sD_{\red} \setminus \bigcup_{\omega_i = \infty} \sD_i$.
\end{definition}

We are now in position to define Campana points and Darmon points. We collectively abbreviate them as semi-integral points, meaning that one may replace the words ``semi-integral" with either ''Campana" or ``Darmon" to obtain valid statements that are independent of the semi-integral notion chosen.

\begin{definition}
  \label{def:semi-integral}
  Let $(X, \Domega)$ be an orbifold pair with a $\sO_S$-model $(\sX,\sDomega)$. For any $v \in \Omega_k$ define the set of $v$-adic (or local) semi-integral points, denoted $(\sX,\sDomega)^*(\sO_v)$, by
  \[
    (\sX,\sDomega)^*(\sO_v) = 
    \begin{cases}
      (X \setminus D_{\inf})(k_v) &\text{if } v \in S,\\
      \{M_v \in (\sX \setminus \sD_{\inf})(\sO_v) \ : \ n_v(\sD_i, M_v) \text{ admissible if } \omega_i \neq \infty \} &\text{if } v \notin S,
    \end{cases}
  \]
  where the additional admissibility condition on $n_v(\sD_i, M_v)$ for $v \in \Omega_k \setminus S$ is given by:
  \begin{itemize}
    \item $n_v(\sD_i, M_v) \in \ZZ_{\ge \omega_i} \cup \{0, \infty\}$ for Campana points;
    \item $n_v(\sD_i, M_v) \in \omega_i\ZZ_{\ge 0} \cup \{\infty\}$ for Darmon points. 
  \end{itemize}
  We say that $M \in X(k)$ \emph{is a (global) semi-integral point on $(\sX,\sDomega)$} if $M$ is a $v$-adic semi-integral on $(\mathcal X,\sD_{\uomega})$ for all $v \in \Omega_K$. We denote the set of (global) semi-integral points by $(\sX, \sDomega)^*(\sO_S)$.
\end{definition}

If we want to specify which of the two notions we are working with, we shall substitute $*=\cc$ for Campana points and $*=\dd$ for Darmon points.

\begin{remark}
  In view of the above definition, any global semi-integral point $M$ must be an $S$-integral point of $\sX \setminus \sD_{\inf}$ and hence $(\sX,\sDomega)^*(\sO_S) \subset (\sX \setminus \sD_{\inf})(\sO_S)$.
\end{remark}

\begin{remark}
  From the definition of Campana, and of Darmon points, these two notions of semi-integral points only differ at how they intersect with the orbifold divisor. In fact, it is clear from the definition that
  \[
    (\sX,\sDomega)^{\dd}(\sO_S) \subset (\sX,\sDomega)^{\cc}(\sO_S) \quad \text{and} \quad
    (\sX,\sDomega)^{\dd}(\sO_v) \subset (\sX,\sDomega)^{\cc}(\sO_v).
  \] 
  Furthermore, the connection between Campana points and $m$-full numbers, and between Darmon points and $m$-th powers is now apparent in view of the following (for which a detailed explanation can be found at \cite[Example 1]{pieropan2023}). If an open subset $\sU \subset \sX$ is given such that $M_v \in \sU(\sO_v)$ and $\sD_i$ is locally defined by a rational function $f$, which is regular on $\sU$, then $n_v(\sD_i, M_v)= \val_v(f(M_v))$, where $\val_v$ denotes the $v$-adic valuation of $\sO_v$. 
\end{remark}

\begin{remark}
A generalisation of the semi-integrality notion, termed $\sM$-points, was recently introduced in \cite{Moe24}. The main idea behind it is to allow more flexibility for the set of admissible $n_v(\sD_i, M_v)$ in Definition~\ref{def:semi-integral}. This new notion features some interesting examples of pairs whose global points are of particular interest to the number theory community.
\end{remark}

\subsection{Adelic space and local-global principles} 
Given a Campana orbifold $(X, \Domega)$, set $U = X \setminus D_{\red}$ and $\sU = \sX \setminus \cup_{i \in I} \sD_i$. As explained in \cite{MNS22}, to define an adelic space in a ``minimal'' way, we need to distinguish between semi-integral points away from the boundary divisor and those of them lying on $D_{\fin}$. Another reason to partition $(\sX, \sDomega)$ into a $U$ piece and a $D_{\fin}$ piece, and study those pieces separately, is that points on the $D_{\fin}$ piece come from a lower dimensional variety and generally may behave differently, especially when it comes to local-global principles. This is illustrated Theorem~\ref{thm:wwa-general-pairs}.

\begin{definition}
  \label{def:str-nstr}
  Let $(X, \Domega)$ be an orbifold pair with a $\sO_S$-model $(\sX,\sDomega)$. We define by
  \[
    \begin{split}
      (\sX, \sDomega)_{\str}^{*}(\sO_S) &\coloneq (\sX,\sDomega)^{*}(\sO_S) \cap U(k), \\
      (\sX, \sDomega)_{\ns}^{*}(\sO_S) &\coloneq (\sX,\sDomega)^{*}(\sO_S) \cap \sD_{\fin}(\sO_S),
    \end{split}
  \]
  the sets of \emph{strict} and of \emph{non-strict} semi-integral points on $(\sX,\sDomega)$, respectively. Thus the set of global semi-integral points may be partitioned as the disjoint union
  \[
    (\sX, \sDomega)^{*}(\sO_S) = (\sX, \sDomega)_{\str}^{*}(\sO_S) \coprod (\sX, \sDomega)_{\ns}^{*}(\sO_S).
  \]
  We define the strict $(\sX, \sDomega)_{\str}^{*}(\sO_v) = (\sX,\sDomega)(\sO_v) \cap U(k_v)$ and non-strict $(\sX, \sDomega)_{\ns}^{*}(\sO_S) = (\sX,\sDomega)(\sO_v) \cap \sD_{\fin}(\sO_v)$ local points in an analogous way, therefore giving a partitioning for local points
   \[
    (\sX, \sDomega)^{*}(\sO_v) = (\sX, \sDomega)_{\str}^{*}(\sO_v) \coprod (\sX, \sDomega)_{\ns}^{*}(\sO_v).
  \]
\end{definition}

The next natural object to look at is the semi-integral adelic space. We shall work with the definitions in \cite[\S2]{MNS22} which are well-suited to examine obstructions to local-global principles. We recall the definitions of the semi-integral adelic space, of the Hasse principle, of weak and of strong approximation below.

\begin{definition} 
  \label{def:-semi-integral-adeles}
  Given a finite set $T \subset \Omega_k$, we define the sets of \emph{strict $T$-adelic semi-integral points} $(\sX,\sDomega)_{\str}^*(\AA_{k, S}^T)$ and \emph{non-strict $T$-adelic semi-integral points} $(\sX,\sDomega)_{\ns}^*(\AA_{k, S}^T)$ of $(\sX,\sDomega)$ by
  \[
    \begin{split}
      \(\sX,\sDomega\)_{\str}^*\(\AA_{k, S}^T\)
      &:= \prod_{v \in S \setminus (S \cap T)} U(k_v) \times \sideset{}{'}\prod_{v \in \Omega_K \setminus (S\cup T)}\( \(\sX,\sDomega\)^*\(\sO_v\) \cap U(k_v),\sU\(\sO_v\)\) 
      , \\
      \(\sX,\sDomega\)_{\ns}^*\(\AA_{k, S}^T\)
      &:=\bigcup_{\substack{i \in I \\ \omega_i \neq \infty}}
      \left( \prod_{v \in S \setminus (S \cap T)} D_{i, \fin}(k_v) 
      \times \prod_{v \in \Omega_k \setminus (S\cup T)} \(\sX,\sDomega\)^*\(\sO_v\) \cap \sD_{i, \fin}\(\sO_v\) 
      \right).
    \end{split}
  \]
  We equip $(\sX,\sDomega)_{\str}^*(\AA_{k, S}^T)$ with the restricted product topology, while $(\sX,\sDomega)_{\ns}^*\(\AA_{k, S}^T\)$ is considered as a subset of $\prod_{v \in S \setminus (S \cap T)}D_{\fin}(k_v) \times \prod_{v \in \Omega_k \setminus (S\cup T)} \sD_{\fin}(\sO_v)$ whose product topology it inherits. We define the set of \emph{$T$-adelic semi-integral points} as the disjoint union
  \[
    \(\sX,\sDomega\)^*\(\AA_{k, S}^T\) 
    := \(\sX,\sDomega\)_{\str}^*\(\AA_{k, S}^T\) \coprod \(\sX,\sDomega\)_{\ns}^*\(\AA_{k, S}^T\),
  \]
  endowed with the coproduct topology, which we call the \emph{adelic topology} on $(\sX,\sDomega)^*(\AA^T_{k, S})$.

  We define the \emph{adelic semi-integral points} to be the $T$-adelic semi-integral points for $T = \emptyset$ and denote them by $(\sX,\sDomega)^*(\AA_{k, S})$. We define the sets of \emph{strict} and \emph{non-strict adelic semi-integral points} analogously. We omit $S$ when it is clear from context.
\end{definition}

\begin{definition} 
  \label{def:semi-integral-HP}
  Let $\mathscr{F}$ be a collection of $\sO_S$-models of Campana orbifolds. We say that $\mathscr{F}$ satisfies the \emph{Hasse principle for semi-integral points} if the following implication holds for all $(\sX,\sDomega) \in \mathscr{F}$:
  \[
    \(\sX, \sDomega\)^*\(\AA_{k, S}\) \neq \emptyset 
    \implies \(\sX,\sDomega\)^*\(\sO_S\) \neq \emptyset.
  \]
\end{definition}

In a similar fashion, the Hasse principle may be defined for strict or non-strict semi-integral points by restricting to the desired component of the adelic space. In fact, for the main example of orbifold pairs in this work the strict semi-integral Hasse principle will turn out to be of a more interesting nature.

Recall that Definition~\ref{def:-semi-integral-adeles} ensures that $(\sX, \sDomega)^*(\AA_{k, S \cup T}^T) = (\sX, \sDomega)^*(\AA_{k, S}^T)$. This leads to the following definitions of weak and of strong approximation.

\begin{definition} 
  \label{def:WA-SA}
  Let $(\sX,\sDomega)$ be an $\sO_S$-model of a Campana orbifold $(X, \Domega)$, and let $T$ be a finite set of places of $k$.
  \begin{enumerate}[label=(\roman*)]
    \item We say that $(\sX,\sDomega)$ satisfies \emph{weak approximation (for semi-integral points) off $T$} (abbreviated \emph{WA off $T$}) if $(\sX,\sDomega)^*(\sO_{S \cup T})$ is dense in $(\sX, \sDomega)^*(\AA_{k, S}^T)$ for the product topology. We say that $(\sX,\sDomega)$ satisfies \emph{weak weak approximation (for semi-integral points)} (abbreviated \emph{WWA}) if it satisfies WA off some finite set of places $T$. We say that $(\sX,\sDomega)$ satisfies \emph{weak approximation (for semi-integral points)} (abbreviated \emph{WA}) if it satisfies WA off $T = \emptyset$.
    \item We say that $(\sX, \sDomega)$ satisfies \emph{strong approximation (for semi-integral points) off $T$} (abbreviated \emph{SA} off $T$) if $(\sX, \sDomega)^*(\sO_{S \cup T})$ is dense in $(\sX, \sDomega)^*(\AA_{k, S}^T)$ for the adelic topology. We say that $(\sX,\sDomega)$ satisfies \emph{strong approximation (for semi-integral points)} (abbreviated \emph{SA}) if it satisfies SA off $T = \emptyset$.
  \end{enumerate}
  We extend the above definitions to strict and non-strict semi-integral points by restricting the strict or non-strict adelic space, respectively.
\end{definition}

We conclude with the status of weak weak approximation. Comparing the following theorem to \cite[Theorem 1.3]{NS24} shows how the different definitions of adeles can lead to different results on the approximation properties of general orbifold pairs. We refer to \cite[Appendix A]{Moe24} for a detailed comparison.

\begin{theorem}
  \label{thm:wwa-general-pairs}
   Let $(\sX,\sDomega)$ be an $\sO_S$-model of a Campana orbifold $(X, \Domega)$. Assume that there exists at least two distinct irreducible $D_i, D_j \subset D_{\red}$ of finite weights such that $D_{i, \fin}$ and $D_{j, \fin}$ are geometrically integral. Then $(\sX,\sDomega)$ fails WWA.
\end{theorem}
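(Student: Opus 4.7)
The plan is to exhibit, for any finite set $T \subset \Omega_k$, an adelic semi-integral point that is not in the closure of the global semi-integral points off $T$.

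Since the adelic space $(\sX,\sDomega)^*(\AA_{k,S}^T)$ is the coproduct of its strict and non-strict parts, density is tested in each component separately. I would focus on the non-strict component, whose adelic space is $\bigcup_\ell \prod_v D_{\ell,\fin}(k_v) \subset \prod_v D_\fin(k_v)$. A global non-strict semi-integral point $M$ lies in $D_\ell(k) \setminus D_\inf(k)$ for exactly one $\ell$, and embeds via the adelic map into the single piece $\prod_v D_{\ell,\fin}(k_v)$. For $M$ to be globally semi-integral on $(\sX,\sDomega)$, the intersection multiplicities $n_v(\sD_k, M_v)$ must be Campana/Darmon admissible at every $v$ and every $k \neq \ell$, so that $M$, viewed as a point on $D_\ell$, is a semi-integral point on the induced orbifold pair whose boundary is $\sum_{k \neq \ell,\, \omega_k \neq \infty}(1 - 1/\omega_k)(D_\ell \cap D_k)$ (with intersections coming from infinite-weight components being forbidden).

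The key construction is an adelic non-strict point $(P_v)_v \in \prod_v D_{i,\fin}(k_v)$, with each $P_v$ chosen in $D_{i,\fin}(k_v) \setminus D_{j,\fin}(k_v)$ so that only $M \in D_i(k) \setminus D_\inf(k)$ can $v$-adically approximate it at infinitely many places at once. The geometric integrality of $D_{i,\fin}$ and $D_{j,\fin}$ ensures that such local selections may be made freely at every place and that $D_i \cap D_j$ is a proper closed subvariety of each. By prescribing the local data of $P_v$ near $D_i \cap D_j$ to land in the admissible Campana/Darmon valuation classes, the adelic point $(P_v)$ is itself semi-integral. The presence of the second finite-weight component $D_j$ then imposes a genuine Campana/Darmon condition on any approximating $M \in D_i(k)$ via the induced orbifold structure on $D_i$, severely restricting the admissible valuations. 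In the Markoff situation, for instance, the combined Campana conditions at $D_1 \cap D_2$ and $D_1 \cap D_3$ pin down the valuation of $M \in D_1(\QQ)$ to be zero at every prime, leaving only finitely many global non-strict semi-integral points — plainly insufficient for density in $\prod_v D_{1,\fin}(k_v)$.

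The hardest step will be making the obstruction uniform across every finite $T$. Specifically, one must arrange that the valuation classes of $(P_v)$ near $D_i \cap D_j$ are simultaneously incompatible with any single fixed global $M$ at infinitely many places, so that removing a finite set $T$ cannot eliminate the failure. I expect this to follow from a careful analysis of the admissible local Campana/Darmon valuation classes on $D_{i,\fin}$, combined with a counting or rigidity argument showing that the induced orbifold $(D_i, \sum_{k \neq i,\, \omega_k \neq \infty}(1 - 1/\omega_k)(D_i \cap D_k))$ cannot carry a global semi-integral set dense in the local semi-integral adelic product of $D_{i,\fin}$.
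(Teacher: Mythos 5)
Your first step --- testing density only on the non-strict part of the adelic space --- matches the paper, but the heart of your argument has a genuine gap. You place \emph{every} local component $P_v$ on the single divisor $D_{i,\fin}$ and then want to conclude by showing that the global semi-integral points of the induced orbifold $\bigl(D_i,\ \sum_{k\neq i,\,\omega_k\neq\infty}(1-1/\omega_k)(D_i\cap D_k)\bigr)$ can never be dense in the corresponding local product. That non-density claim is not proved in your sketch, does not follow from the hypotheses, and is false in general: the theorem assumes only that $D_{i,\fin}$ and $D_{j,\fin}$ are geometrically integral, so $D_i\cap D_j$ may be empty, in which case $D_j$ imposes no condition whatsoever on points of $D_i$ and your obstruction vanishes; even when the intersection is non-empty, nothing in the hypotheses prevents the induced orbifold on $D_i$ from satisfying weak (weak) approximation. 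Your Markoff illustration is also inaccurate: for $M=(0:0:x_2:x_3)\in D_1$ the Campana conditions read $\valp(x_2)\in\ZZ_{\ge\omega_2}\cup\{0,\infty\}$ and $\valp(x_3)\in\ZZ_{\ge\omega_3}\cup\{0,\infty\}$, which leaves infinitely many global non-strict points (e.g. $(0:0:a^{\omega_2}:b^{\omega_3})$), not finitely many with all valuations zero.

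The paper's proof avoids this entirely by \emph{mixing} components in the adelic point rather than staying on one divisor. Geometric integrality of $D_{i,\fin}$ and $D_{j,\fin}$, via Lang--Weil and Hensel's lemma, produces $(M_v)$ with infinitely many components in $\sD_{i,\fin}(\sO_v)\setminus\bigcup_{k\neq i}\sD_{k,\fin}(\sO_v)$ and infinitely many in $\sD_{j,\fin}(\sO_v)\setminus\bigcup_{k\neq j}\sD_{k,\fin}(\sO_v)$. Given any finite $T$, one finds places $v',v''\notin T$ of the two kinds; by the coproduct topology only a non-strict global point could approximate $(M_v)_{v\notin T}$, and a sufficiently close approximation at $v'$ and at $v''$ would force that single global point to lie simultaneously in $\sD_{i,\fin}\setminus\bigcup_{k\neq i}\sD_{k,\fin}$ and in $\sD_{j,\fin}\setminus\bigcup_{k\neq j}\sD_{k,\fin}$, which is impossible since $i\neq j$. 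This also resolves the uniformity in $T$ that you correctly flag as the hardest step: infinitely many places of each kind survive the removal of any finite $T$, so no separate rigidity or counting argument is needed. To salvage your single-component strategy you would have to prove the non-density statement for the induced orbifold on $D_i$, which is a stronger (and in general false) assertion than the theorem itself.
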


\begin{proof}
  Let $D_i, D_j$ be as in the statement. Select $(M_v) \in (\sX,\sDomega)_{\ns}^*(\AA_{k, S})$ such that there are infinitely many $M_v \in \sD_{i, \fin}(\sO_v) \setminus \bigcup_{k \neq i} \sD_{k, \fin}(\sO_v)$ and  infinitely many $M_v \in \sD_{j, \fin}(\sO_v) \setminus \bigcup_{k \neq j} \sD_{k, \fin}(\sO_v)$. Such $(M_v)$ exists as $D_{i, \fin}$ and $D_{j, \fin}$ are geometrically integral, their respective intersections with $D_{\fin}$ minus the respective component are of positive codimension and thus the Lang--Weil estimates and Hensel’s lemma imply that the above sets are non-empty for all sufficiently large $v$. Chose a finite $T \subset \Omega_K$. Then clearly, $(M_v)_{v \notin T}$ has infinitely many components coming from $D_{i, \fin}$ and infinitely many coming from $D_{j, \fin}$ as above and all of them are away from their respective intersections with $D_{\fin}$ minus the respective component. It follows from Definition~\ref{def:WA-SA} that $(M_v)_{v \notin T}$ may only be approximated by a non-strict semi-integral global point. Let $S' \subset \Omega_k \setminus T$ be finite and such that there are $v', v'' \in S'$ for which $M_{v'} \in \sD_{i, \fin}(\sO_{v'}) \setminus \bigcup_{k \neq i} \sD_{k, \fin}(\sO_{v'})$ and $M_{v''} \in \sD_{j, \fin}(\sO_{v''}) \setminus \bigcup_{k \neq j} \sD_{k, \fin}(\sO_{v''})$. Chose $\varepsilon > 0$ less than the minimum of the $v'$-adic distance between $M_{v'}$ and $\bigcup_{k \neq i} \sD_{k, \fin}(\sO_{v'})$ and less than the $v''$-adic distance between $M_{v''}$ and $\bigcup_{k \neq j} \sD_{k, \fin}(\sO_{v''})$. If there were $M \in (\sX,\sDomega)_{\ns}^*(\sO_S)$ for which $|M - M_v|_{v'} < \varepsilon$, then as $(\sX,\sDomega)_{\ns}^*(\sO_S) \subset (\sX,\sDomega)_{\ns}^*(\sO_{v'})$ one must have $M \in (\sD_{i, \fin}(\sO_v) \setminus \bigcup_{k \neq i} \sD_{k, \fin}(\sO_v)) \cap (\sX,\sDomega)_{\ns}^*(\sO_S) = \sD_{i, \fin}(\sO_S) \setminus \bigcup_{k \neq i} \sD_{k, \fin}(\sO_S)$. Similarly, as $|M - M_w|_{v''} < \varepsilon$, then as any such $M$ has to lie in $\sD_{j, \fin}(\sO_S) \setminus \bigcup_{k \neq j} \sD_{k, \fin}(\sO_S)$. But the intersection of $\sD_{i, \fin}(\sO_S) \setminus \bigcup_{k \neq i} \sD_{k, \fin}(\sO_S)$ and $\sD_{j, \fin}(\sO_S) \setminus \bigcup_{k \neq j} \sD_{k, \fin}(\sO_S)$ is clearly empty as $i \neq j$. Therefore WA off $T$ fails. As $T$ is arbitrary, we conclude that WWA fails for $(\sX,\sDomega)$, which completes the proof.
\end{proof}
 
\subsection{The semi-integral Brauer--Manin obstruction}
\label{subsec:bmo}
Let $V$ be a smooth but not necessarily proper variety over $k$ with an $\sO_S$-model $\sV$. We begin by recalling the classical definition of the Brauer--Manin obstruction for rational points given by Manin \cite{manin}. Its version for integral points was defined by Colliot-Thélène and Xu \cite{intbmo} in a similar fashion. Our main reference for the classical theory is \cite{colliotsko_2021}.

\begin{definition}
  The Brauer group of $V$ is the second étale cohomology group $\Br V := \HH^{2}_{\et}(V,\mathbb{G}_{m})$. Let $\Br_1 V:=\ker(\Br V \rightarrow \Br V_{\bar{k}})$. The group $\Br_1 V$ will be referred to as the \emph{algebraic Brauer group of $V$}, while $\Br V/\Br_{1} V$ is the \emph{transcendental Brauer group of $V$}.
\end{definition} 

There is a canonical injective homomorphism $\inv_v: \Br k_v \rightarrow \QQ/\ZZ$ for any $v \in \Omega_k$ \cite[Def.~13.1.7]{colliotsko_2021}, whose image is $\frac{1}{2}\ZZ/\ZZ \subset \QQ/\ZZ$ if $v$ is a real place, $0$ if it is a complex place and is an isomorphism if $v$ is finite \cite[Thm.~13.1.8]{colliotsko_2021}. 

Given $\alpha \in \Br V$, there exists a finite set of places $S_{\alpha} \subseteq \Omega_k$, such that $\inv_v \alpha (\cdot) : V (k_v) \rightarrow \QQ/\ZZ$ vanishes on $\sV(\sO_v)$ for all $v \not\in S_{\alpha}$ \cite[Prop.~13.3.1]{colliotsko_2021}. This shows that the Brauer--Manin pairing, as given below, is well defined.
\[
  \begin{split}
    V(\AA_k) \times \Br V &\longrightarrow \QQ/\ZZ, \\ 
    \((M_v),\alpha\) &\longmapsto \sum_{v \in \Omega_k} \inv_v(\alpha(M_v)).
  \end{split}
\] 

The Brauer--Manin set, denoted $V(\AA_k)^{\Br}$ or $V(\AA_k)^{\Br V}$ if one wants to emphasise on the Brauer group used, is defined as the left kernel of that paring. Since the following diagram commutes:
\[
  \begin{tikzcd}
    & V(k) \arrow[r,hook] \arrow[d,"\alpha(\cdot)"] & V(\AA_k) \arrow[d,"\alpha(\cdot)"] & & \\
    0 \arrow[r] & \Br k \arrow[r] & \bigoplus\limits_{v \in \Omega_K} \Br k_v \arrow[r,"\sum_v \inv_v"] & \QQ/\ZZ \arrow[r] & 0,
  \end{tikzcd}
\]
where exactness of the bottom row is implied by the Albert--Brauer--Hasse--Noether theorem and class field theory, there is a chain of inclusions $V(k) \subseteq V (\AA_k)^{\Br} \subseteq V(\AA_k)$. It gives an obstruction to the existence of $k$-rational points on $V$ and to their density in $V(\AA_k)$. If $B \subset \Br V$, define the intermediate obstruction set $V (\AA_k)^{B}$ as the set of adeles that pair to zero with all elements of $B$. An algebraic Brauer--Manin obstruction refers to selecting $B = \Br_1 V$ and similarly for a transcendental Brauer--Manin obstruction.

We continue with the Brauer--Manin obstruction for semi-integral points as introduced in \cite[\S3]{MNS22}. Recall that for a Campana orbifold $(X, \Domega)$ we have set $U = X \setminus D_{\red}$.

\begin{definition} 
  \label{def:SIBMO}
  Define the strict semi-integral Brauer--Manin set $(\sX, \sDomega)_{\str}^*(\AA_{k, S})^{\Br} = (\sX, \sDomega)_{\str}^*(\AA_{k, S})^{\Br U}$ as the right kernel of the Brauer--Manin pairing $\Br U \times U(\AA_k) \rightarrow \QQ/\ZZ$ when restricted to strict semi-integral points. Define the non-strict semi-integral Brauer--Manin set $\(\sX, \sDomega\)_{\ns}^*\(\AA_{k, S}\)^{\Br}$ as the preimage of $\bigcup_{\omega_i \neq  \infty} D_{i, \fin}(\AA_k)^{\Br D_{i, \fin}}$ under the natural inclusion $\(\sX, \sD\)_{\ns}^*\(\AA_{k, S}\)\hookrightarrow \bigcup_{\omega_i \neq \infty} D_{i, \fin}(\AA_k)$. Then define the semi-integral Brauer--Manin set as the disjoint union
  \[
    \(\sX, \sD\)^*\(\AA_{k, S}\)^{\Br}
    := \(\sX, \sD\)_{\str}^*\(\AA_{k, S}\)^{\Br} \coprod \(\sX, \sD\)_{\ns}^*\(\AA_{k, S}\)^{\Br}.  
  \]
\end{definition}

To study local-global principles, it will be useful to work with the projections of the various Brauer--Manin sets to the $T$-adeles.

\begin{definition} \label{def:Br-T-adeles}
  We define $(\sX, \sD)^*(\AA_{k, S \cup T}^T)^{\Br}$ to be the projection of $(\sX, \sD)^*(\AA_{k, S \cup T})^{\Br}$ to $(\sX, \sD)^*(\AA_{k, S\cup T}^T)$ and similarly for strict, non-strict, integral and rational points.
\end{definition}

We shall be particularly interested in intermediate obstruction sets for strict points. Given a subset $B \subset \Br U$, we can define an intermediate obstruction set $(\sX,\sD)_{\str}^*(\AA_{k, S})^B$ by restricting the Brauer--Manin pairing to only $B$ for the strict points. On taking projections away from $T$ as above, we get a sequence of inclusions 
\[
  \(\sX, \sDomega\)_{\str}^*\(\sO_{S \cup T}\) \subset \(\sX,\sDomega\)_{\str}^*\(\AA_{k, S \cup T}^T\)^{\Br} \subset \(\sX, \sDomega\)_{\str}^*\(\AA_{k, S \cup T}^T\)^B  \subset \(\sX, \sDomega\)_{\str}^*\(\AA_{k, S}^T\).
\] 

Therefore the Brauer--Manin obstruction and its different partial forms can obstruct the local-global principles for semi-integral points, as well as for strict and non-strict semi-integral points. This is explained in detail in the next definitions following \cite[\S3]{MNS22}. 

\begin{definition} 
  \label{def:BMO-HP}
  Let $(X, \Domega)$ be a Campana orbifold over a $k$ with $\sO_S$-model $(\sX,\sDomega)$. There is a \emph{Brauer--Manin obstruction to the semi-integral Hasse principle} if 
  \[
    (\sX,\sDomega)^*(\AA_{k, S}) \neq \emptyset \quad \text{but} \quad (\sX,\sDomega)^*(\AA_{k, S})^{\Br} = \emptyset.
  \] 
  Otherwise, there is no Brauer--Manin obstruction to the semi-integral Hasse principle. A Brauer--Manin obstruction to the Hasse principle for strict points and for non-strict points is defined analogously by restricting to the strict adeles and to the non-strict adeles, respectively.
\end{definition}

For a finite subset $T \subset \Omega_K$ let $\overline{(\sX, \sDomega)_{\str}^*(\AA_{k, S \cup T}^T)^{\Br}}$ and $\overline{(\sX, \sDomega)_{\ns}^*(\AA_{k, S \cup T}^T)^{\Br}}$ denote the closures of $(\sX, \sDomega)_{\str}^*(\AA_{k, S \cup T}^T)^{\Br}$ and $(\sX, \sDomega)_{\ns}^*(\AA_{k, S \cup T}^T)^{\Br}$ in the product topology, respectively. Let also
\[
  \overline{(\sX, \sDomega)^*(\AA_{k, S \cup T}^T)^{\Br}} =
  \overline{(\sX, \sDomega)_{\str}^*(\AA_{k, S \cup T}^T)^{\Br}} \coprod \overline{(\sX, \sDomega)_{\ns}^*(\AA_{k, S \cup T}^T)^{\Br}}.
\] 
We may now recall the Brauer--Manin obstruction to semi-integral weak approximation and to semi-integral strong approximation.

\begin{definition}
  \label{def:BMO-WA-SA}
  Let $(X,\Domega)$ be a Campana orbifold over $k$ with $\sO_S$-model $(\sX,\sDomega)$. 
  \begin{enumerate}[label=(\roman*)]
    \item We say that there is a \emph{Brauer--Manin obstruction to weak approximation off $T$ for semi-integral points} if 
    \[
      \overline{(\sX, \sDomega)^*(\AA_{k, S \cup T}^T)^{\Br}} \neq (\sX, \sDomega)^*(\AA_{k, S}^T).
    \]
    \item We say that there is a \emph{Brauer--Manin obstruction to strong approximation off $T$ for semi-integral points} if 
    \[
      (\sX, \sDomega)^*(\AA_{k, S \cup T}^T)^{\Br} \neq (\sX, \sDomega)^*(\AA_{k, S}^T).
    \]
  \end{enumerate}
  We shall omit ``off $T$'' in each definition above if $T = \emptyset$. Alternatively, if the above assumptions are not satisfied we say that there is no Brauer--Manin obstruction to weak or to strong approximation for semi-integral points, respectively. A Brauer--Manin obstruction to weak weak approximation as well as for weak and for strong approximation for strict and for non-strict semi-integral points is defined in a similar way by restricting the above definitions to strict or non-strict points, respectively.
\end{definition}

\section{Markoff orbifold pairs}
\label{sec:Markoff}
We now turn our attention to the central objects of interest in this paper, namely the Markoff orbifold pairs $(X_m, \Domega)$ associated to \eqref{eqn:Um} with a $\ZZ$-model $(\sX_m, \sDomega)$, as defined in the introduction. For us $S = \{\infty\} \subset \Omega_{\QQ}$ and thus $\sO_S = \ZZ$. 
\subsection*{Semi-integral points}
The next proposition gives an explicit arithmetic description of the set of semi-integral points on $(\sX_m, \sDomega)$.

\begin{proposition}
  \label{prop:semi-integral-criterion}
  Let $p$ be a finite prime. If $M_p = (x_0:x_1:x_2:x_3) \in \sX_m(\Zp) \subset \PP^3(\Zp)$ with $x_0,x_1,x_2,x_3$ not all divisible by $p$, then $n_p(\sD_i, M_p) = \min\{\valp(x_0),\valp(x_i)\}$ for $i = 1,2,3$. In particular,
  \begin{enumerate}[label=\emph{(\roman*)}]
    \item $M_p$ is a local Campana point on $(\sX_m, \sDomega)$ if
    \[
      \min\{\valp(x_0),\valp(x_i)\} \in \ZZ_{\geq \omega_i} \cup \{0, \infty\}, \quad  i=1,2,3;
    \] 
    \item $M_p$ is a local Darmon point on $(\sX_m, \sDomega)$ if
    \[
      \min\{\valp(x_0),\valp(x_i)\} \in \omega_i\ZZ_{\ge 0} \cup \{\infty\}, \quad  i=1,2,3.
    \]
  \end{enumerate}
\end{proposition}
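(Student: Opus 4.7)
The plan is to read off the intersection multiplicity $n_p(\sD_i, M_p)$ directly from Definition~\ref{def:intersectionmult} by expressing $\sD_i$ as a scheme-theoretic complete intersection in a suitable affine chart of $\sX_m \subset \PP^3_{\ZZ}$, and then to deduce (i) and (ii) by matching against the admissibility conditions of Definition~\ref{def:semi-integral}.

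First I would check that $D_i$ is an entire line in $\PP^3$ lying on $X_m$: after substituting $x_0 = x_i = 0$ into \eqref{eqn:Xm}, every monomial vanishes, so the Markoff cubic already belongs to the ideal $(x_0, x_i)$. Consequently $\sD_i$, viewed as a closed subscheme of $\sX_m \subset \PP^3_{\ZZ}$, is cut out scheme-theoretically by the homogeneous ideal $(x_0, x_i)$, with no extra multiplicity contribution from the cubic.

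Next, since $\gcd(x_0, x_1, x_2, x_3) = 1$, at least one coordinate $x_j$ is a unit in $\Zp$, and the $\Zp$-point $M_p$ factors through the affine chart $U_j = \{x_j \neq 0\}$. In this chart $\sD_i$ is defined by the two regular functions $x_0/x_j$ and $x_i/x_j$, so pulling back along $M_p$ produces the ideal
\[
  I \;=\; \bigl(x_0/x_j,\; x_i/x_j\bigr)\cdot \Zp \;=\; (x_0,\; x_i)\cdot \Zp \;\subset\; \Zp,
\]
where the second equality uses $x_j \in \Zp^{\times}$. Since $\Zp$ is a DVR with uniformiser $p$, this ideal equals $(p^n)$ with $n = \min\{\valp(x_0), \valp(x_i)\}$ under the convention $\valp(0) = \infty$; this also covers the case $x_0 = x_i = 0$, where $I = (0)$ corresponds to $n = \infty$ in Definition~\ref{def:intersectionmult}, consistently with $M_p \in D_i(\Qp)$. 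This establishes the formula $n_p(\sD_i, M_p) = \min\{\valp(x_0), \valp(x_i)\}$.

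Parts (i) and (ii) then follow immediately by substituting this expression into the Campana, respectively Darmon, admissibility conditions in Definition~\ref{def:semi-integral}. The main (and really only) obstacle is the scheme-theoretic verification in the first step, namely that the line $V(x_0, x_i) \subset \PP^3_{\ZZ}$ sits inside $\sX_m$ as a reduced subscheme so that the ideal $(x_0, x_i)$ genuinely defines $\sD_i$; once this is noted, the rest is a direct unwinding of the definitions.
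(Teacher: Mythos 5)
Your proof is correct and follows essentially the same route as the paper: both compute the fibre product $\Spec(\Zp)\times_{\sX_m}\sD_i$ in an affine open containing the point (the paper uses an auxiliary linear form $l$ with $l(M_p)=1$ built from $\gcd(x_0,\dots,x_3)=1$, you use the standard chart $\{x_j\neq 0\}$ with $x_j\in\Zp^{\times}$, which is the same device) and identify the resulting ideal with $(x_0,x_i)\subset\Zp$, hence $n_p(\sD_i,M_p)=\min\{\valp(x_0),\valp(x_i)\}$. Your preliminary observation that the cubic lies in $(x_0,x_i)$, so that $\sD_i=V(x_0,x_i)\cong\PP^1_{\ZZ}$ is the (reduced) closure of $D_i$ in $\sX_m$, is exactly what the paper uses implicitly when forming the fibre product with $\proj(\Zp[y_0,\dots,y_3]/(y_0,y_i))$.
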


\begin{proof}
  Chose coordinates $\sX_m \subset \PP^3_\ZZ(y_0,y_1,y_2,y_3)$ and consider one of the three components of $\sDomega$, $\sD_1=\{y_0=y_1=0\}$ say. If $M_p \in D_1(\Qp)$, then on one hand $x_0 = x_1 = 0$, while on the other $n_p(\sD_1, M_p) = \infty$, which confirms the claim. Assume now that $M_p \notin D_1(\Qp)$, and let us compute the local intersection multiplicity from Definition \ref{def:intersectionmult}. We may apply the base change $\sX_{m,\ZZ_p}$ since the question is of a local nature: 
  \[
    \begin{tikzcd}
      {M_p \times_{\sX_{m,\Zp}} \sD_1} \arrow[d] \arrow[r]
      & {\mathrm{Proj} \( \frac{\Zp[y_0,y_1,y_2,y_3]}{(y_0,y_1)} \)} \arrow[d] \\
      {\mathrm{Spec} \( \frac{\Zp[y_0,y_1,y_2,y_3]}{(y_0-x_0,y_1-x_1,y_2-x_2,y_3-x_3)}\)}\arrow[r] 
      & {\sX_{m,\Zp}}           
  \end{tikzcd}
  \]
  Following \cite[Example 1]{pieropan2023}, by our assumptions on $M_p$ we can find $a_0,a_1,a_2,a_3\in \ZZ_p$ such that $a_0x_0+a_1x_1+a_2x_2+a_3x_3 = 1$. Write $l(y_0,y_1,y_2,y_3)$ for the associated linear form $a_0y_0+a_1y_1+a_2y_2+a_3y_3$. Notice that the image of $M_p$ is contained in the affine open $\sX_{m,\Zp} \setminus \{l = 0\}$. Computing the fibre product then shows that $M_p \times_{\sX_{m,\Zp}} \sD_1$ is isomorphic to
    \[
     \spec{\(\frac{\Zp[y_0,y_1,y_2,y_3]}{(y_0,y_1,y_0-x_0,y_1-x_1,y_2-x_2,y_3-x_3,l(y_0,y_1,y_2,y_3) - 1)}\)} \cong \spec{\(\frac{\Zp}{(x_0,x_1)}\)}.
    \]
    The ideal $(x_0,x_1)$ in $\Zp$ is generated by $p^k$, where $k$ equals the minimum of $\{\valp(x_0),\valp(x_1)\}$, and therefore we have $n_p(\sD_1,M_p)= \min\{\valp(x_0),\valp(x_1)\}$. The same argument applies to $\sD_2$ and $\sD_3$, which shows the claim.
\end{proof}

We continue with an important proposition, which describes the structure of the set of local semi-integral points which are not locally integral.

\begin{proposition}
  \label{prop:semi-integral-criterion-2-inf}
  Let $p$ be a finite prime and assume that $\omega_i$ is finite, while $\omega_j$ and $\omega_k$ are infinite, where $i, j, k \in \{1, 2, 3\}$ are distinct. Then $(\sX_m, \sDomega)_{\str}^*(\Zp) \setminus \sU(\Zp) \neq \emptyset$ and each point $M_p = (x_0, x_1, x_2, x_3)$ inside that set satisfies $\valp(x_j) = \valp(x_k) = 0$ and the following:
  \begin{enumerate}[label=\emph{(\roman*)}]
    \item if $p \equiv 1 \bmod 4$, then $\valp(x_0) \le \valp(x_i)$;
    \item if $p \equiv 3 \bmod 4$, then $\valp(x_0) = \valp(x_i)$; 
    \item if $p = 2$, then $\mathrm{v}_2(x_0) = \mathrm{v}_2(x_i) - 1$.
  \end{enumerate}
\end{proposition}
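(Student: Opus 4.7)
The plan is to split the proof into two parts: (a) derive the listed valuation constraints for any $M_p$ in the set, and (b) construct an explicit element of it. Both parts rely on Proposition~\ref{prop:semi-integral-criterion} to translate $n_p$ into minima of coordinate valuations.

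For (a), fix coprime integral projective coordinates $M_p = (x_0 : x_1 : x_2 : x_3)$. The assumption $M_p \notin \sU(\Zp)$ forces $v_p(x_0) \geq 1$, while $\omega_j = \omega_k = \infty$ yields the constraints $\min(v_p(x_0), v_p(x_l)) = 0$ for $l \in \{j, k\}$ via Proposition~\ref{prop:semi-integral-criterion}, hence $v_p(x_j) = v_p(x_k) = 0$. Rewriting \eqref{eqn:Xm} as
\[
x_j x_k x_i = x_0\bigl(x_j^2 + x_k^2 + x_i^2 - m x_0^2\bigr)
\]
and taking $v_p$ of both sides produces the key identity
\[
v_p(x_i) = v_p(x_0) + v_p\bigl(x_j^2 + x_k^2 + x_i^2 - m x_0^2\bigr).
\]
A case analysis of the right hand factor then completes the structural part. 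For $p \equiv 3 \bmod 4$, the non-existence of $\sqrt{-1}$ in $\Fp$ makes $x_j^2 + x_k^2$ a unit, so the entire factor is a unit and one obtains (ii). For $p \equiv 1 \bmod 4$ only the trivial bound $v_p \geq 0$ is available, giving (i). For $p = 2$, the congruences $x_j^2, x_k^2 \equiv 1 \bmod 8$ force $v_2(x_j^2 + x_k^2) = 1$, and since the identity itself forces $v_2(x_i) \geq v_2(x_0) \geq 1$ (hence $v_2(x_i^2) \geq 2$) and $v_2(m x_0^2) \geq 2$, the factor has $v_2 = 1$ exactly, giving (iii).

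For (b), after relabelling we may assume $i = 1$ and work in the affine chart $x_3 = 1$ near the smooth point $(0, 0, 1) \in D_1$. The partial derivative with respect to $x_1$ of the dehomogenised equation equals $-1$ there, so Hensel's lemma applies to the one-variable polynomial $g(x_1) = p^a x_1^2 - x_1 + (2 p^a - m p^{3a})$ for any $a \geq 1$. Choosing $a$ admissible for the semi-integrality notion in play (for instance $a = \omega_1$, which meets both the Campana and the Darmon condition), lifting the approximate root $x_1 = 0$ yields a true root $x_1 \in \Zp$ with $x_1 \equiv 2 p^a - m p^{3a}$ modulo a sufficiently high power of $p$. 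The valuation works out to $v_p(x_1) = a$ for odd $p$ and $v_2(x_1) = a + 1$ for $p = 2$, so the point $(p^a : x_1 : 1 : 1)$ has coprime coordinates, realises the valuation profile of (i)--(iii), and satisfies $n_p(\sD_1, M_p) = a$ as required. The main technical subtlety is the case $p = 2$ in both parts: structurally, one needs the sharp congruence $x_j^2 + x_k^2 \equiv 2 \bmod 8$; for existence, one must inspect the Newton iterate carefully to confirm $v_2(x_1) = a + 1$ exactly, since the naive Hensel estimate only guarantees $\geq a + 1$.
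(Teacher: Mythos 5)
Your proof is correct and follows the same two-step strategy as the paper: a valuation analysis for the structural constraints (i)--(iii) and an explicit Hensel-based construction for existence. The one place your argument is genuinely tidier is the structural part: packaging everything in the single identity
\[
v_p(x_i) = v_p(x_0) + v_p\bigl(x_j^2 + x_k^2 + x_i^2 - m x_0^2\bigr),
\]
obtained from rewriting \eqref{eqn:Xm} as $x_jx_kx_i = x_0(x_j^2+x_k^2+x_i^2 - mx_0^2)$, and then reading off (i)--(iii) from a case analysis of the valuation of the last factor. The paper reaches the same conclusions by bespoke reductions of the equation modulo $p^{v_p(x_1)+1}$, mod $2$ and mod $4$, which is equivalent but more bookkeeping; in particular the nonnegativity $v_p(x_j^2+x_k^2+x_i^2-mx_0^2)\geq 0$ gives the inequality $v_p(x_i)\geq v_p(x_0)$ uniformly, which the paper argues separately. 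For the existence part, your reduction to the one-variable quadratic $g(x_1)=p^ax_1^2 - x_1 + (2p^a - mp^{3a})$ with $g'(0)=-1$ parallels the paper's choice $x_0=p^{\omega_1}$, $x_1=y_1p^{\omega_1}$, $x_2=x_3\in\Zp^\times$ followed by Hensel after dividing by $p^{\omega_1}$; your version has the minor advantage that the Newton iterate makes the exact valuation $v_2(x_1)=a+1$ at $p=2$ explicit, rather than inferring it from the separately established constraint (iii). One small presentational slip: in the $p\equiv 3 \bmod 4$ case you should also record $v_p(x_i)\geq 1$ (immediate from the identity, or from $n_p(\sD_i,M_p)>0$) before concluding the factor is a unit, as you did explicitly for $p=2$.
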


\begin{proof}
  Without loss of generality we may assume that $\omega_1$ is finite and $\omega_2 = \omega_3 = \infty$. To see that $M_p \in (\sX_m, \sDomega)_{\str}^*(\Zp) \setminus \sU(\Zp)$ exists, fix $x_0 = p^{\omega_1}$, $x_1 = y_1 p^{\omega_1}$ with $y_1 \in \Zp$, and $x_2 = x_3 \in \Zp^{\times}$. Dividing \eqref{eqn:Xm} through by $p^{\omega_1}$ and reducing mod $p$ gives 
  \[
    2x_2^2 - y_1x_2^2 \equiv 0 \bmod p, 
  \]
  which is clearly soluble with $y_1 \equiv 2 \bmod p$. This is a smooth $\Fp$-point on the variety obtained from the division of \eqref{eqn:Xm} by $p^{\omega_1}$, as its partial derivative with respect to $y_1$ does not vanish. Such a $\Fp$-point lifts to a $\Zp$-point by Hensel's lemma and clearly extends to $M_p = (p^{\omega_1}, y_1p^{\omega_1}, x_2, x_3) \in \sX_m(\Zp)$. Moreover, as $x_0, x_2, x_3$ have been chosen carefully, $M_p$ is semi-integral by Proposition~\ref{prop:semi-integral-criterion} and does not belong to $\sU_m(\Zp)$.

  By definition, each $M_p \in (\sX_m, \sDomega)_{\str}^*(\Zp) \setminus \sU(\Zp)$ must satisfy $\valp(x_0) > 0$. Therefore, $\valp(x_0) \ge \omega_1$ and $\valp(x_2) = \valp(x_3) = 0$ by Proposition~\ref{prop:semi-integral-criterion}. Write $x_0 = y_0 p^{\valp(x_0)}$ and $x_1 = y_1 p^{\valp(x_1)}$. It is now clear that $\valp(x_0) \le \valp(x_1)$, otherwise the reduction of $X_m \bmod p^{\valp(x_1) + 1}$ would imply that $y_1x_2x_3 \equiv 0 \bmod p$, which is a contradiction. As $-1 \in \Fp^{*2}$ if and only if $p \equiv 1 \bmod 4$, we conclude also that $p \nmid y_1$ for $p \equiv 3 \bmod 4$.

  Assume that $p = 2$ and $\mathrm{v}_2(x_0) = \mathrm{v}_2(x_1)$. Dividing through $2^{\mathrm{v}_2(x_1)}$ and reducing $X_m \bmod 2$ implies that 
  \[
    0 \equiv y_0(x_2^2 + x_3^2) \equiv y_1x_2x_3 \equiv 1 \bmod 2, 
  \]
  which is clearly a contradiction. On the other hand, if $\mathrm{v}_2(x_0) < \mathrm{v}_2(x_1)$, we may write $x_1 = y_1x_0$ with $y_1 \in \ZZ_2$. Dividing through $x_0$ and reducing mod 4 then gives
  \[
    x_2^2 - y_1x_2x_3 + x_3^2 \equiv 0 \bmod 4.
  \]
  As $x_2, x_3 \in \ZZ_2^{\times}$ this is only possible if $\mathrm{v}_2(y_1) = 1$. This completes the proof.
\end{proof}

\begin{remark}
  \label{rem:inclusion-and-weights}
  Let $\underline{\omega} = (\omega_1, \infty, \infty)$ and $\underline{\omega}' = (\omega_1, \omega_3, \omega_3)$ be given with $\omega_1 < \infty$. Then, by definition $(\sX_m, \sDomega)_{\str}^*(\Zp) \subseteq (\sX_m, \sD_{\underline{\omega}'})_{\str}^*(\Zp)$ and $(\sX_m, \sDomega)_{\ns}^*(\Zp) \subseteq (\sX_m, \sD_{\underline{\omega}'})_{\ns}^*(\Zp)$.
\end{remark}

\subsection*{Values of the local invariant maps}
Recall that, if $[\QQ(\sqrt{m}, \sqrt{m - 4}) : \QQ] = 4$, then $\Br U_m/\Br \QQ = \langle \alpha_{1,-}, \alpha_{2,-}, \alpha_{3,-} \rangle$ by \cite[Prop.~4.5]{LM21}. Moreover, $\Br X_m / \Br \QQ$ is generated by $\alpha$ as explained in \cite[Lem.~3.2]{LM21}, and the following explicit representations as elements of $\Br U_m$ are valid
\[
  \begin{split}
    &\alpha_{i,-} = (x_i/x_0 - 2, m - 4), \quad i = 1, 2, 3, \\
    &\alpha = (x_1^2/x_0^2 - 4, m - 4) = (x_2^2/x_0^2 - 4, m - 4) =(x_3^2/x_0^2 - 4, m - 4). \\
  \end{split}
\]
Finally, $\alpha$ and the $\alpha_{i, -}$ are linked by
\begin{equation}
  \label{eqn:alpha-alpha_i}
  \alpha = \alpha_{1, -} + \alpha_{2, -} + \alpha_{3, -}.
\end{equation}

\begin{lemma}
  \label{lem:star-equivalent-degree4}
  The conditions $m, m - 4, m(m-4) \notin \QQ^{*2}$ and $[\QQ(\sqrt{m}, \sqrt{m - 4}) : \QQ] = 4$ are equivalent whenever $m \neq 0$. 
\end{lemma}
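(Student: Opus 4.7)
This is an instance of the standard criterion that a biquadratic compositum $\QQ(\sqrt{a}, \sqrt{b})$ has degree $4$ over $\QQ$ if and only if none of $a$, $b$, $ab$ lies in $\QQ^{*2}$, specialised to $a = m$ and $b = m - 4$. I would record both implications separately, using nothing beyond the tower law and the elementary fact that two quadratic extensions of $\QQ$ coincide precisely when the product of their generating radicands is a square in $\QQ^{*}$.

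For the forward direction, I would argue as follows. The non-vanishing of $m$ together with $m \notin \QQ^{*2}$ forces $[\QQ(\sqrt{m}) : \QQ] = 2$, and $m - 4 \notin \QQ^{*2}$ forces $[\QQ(\sqrt{m - 4}) : \QQ] = 2$. Hence the compositum has degree either $2$ or $4$, and it has degree $2$ exactly when $\QQ(\sqrt{m}) = \QQ(\sqrt{m - 4})$. The latter equality is equivalent to $\sqrt{m} \cdot \sqrt{m - 4} \in \QQ$, i.e.\ to $m(m - 4) \in \QQ^{*2}$, which is ruled out by assumption.

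For the converse, I would prove the contrapositive: if any one of $m$, $m - 4$, $m(m - 4)$ is a square in $\QQ^{*}$, then $[\QQ(\sqrt{m}, \sqrt{m - 4}) : \QQ] \le 2$. The cases $m \in \QQ^{*2}$ and $m - 4 \in \QQ^{*2}$ are immediate, as one of the two generators becomes rational. If instead $m(m - 4) \in \QQ^{*2}$, then writing $\sqrt{m - 4} = \sqrt{m(m - 4)}/\sqrt{m}$ (here $m \neq 0$ is used to divide) shows $\sqrt{m - 4} \in \QQ(\sqrt{m})$, so again the compositum collapses to $\QQ(\sqrt{m})$ of degree at most $2$.

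There is no real obstacle in this argument; the only point needing care is that $m \neq 0$ is genuinely required so that the identity $\sqrt{m - 4} = \sqrt{m(m - 4)}/\sqrt{m}$ makes sense in the third case of the converse.
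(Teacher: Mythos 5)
Your proposal is correct and follows essentially the same route as the paper: both reduce the statement to elementary quadratic field theory, the only difference being that you invoke the standard fact that $\QQ(\sqrt{a})=\QQ(\sqrt{b})$ exactly when $ab\in\QQ^{*2}$, whereas the paper reproves that fact in place by expanding $\sqrt{m-4}=a+b\sqrt{m}$ and splitting into the cases $a=0$, $b=0$. Your observation that $m\neq 0$ is needed to divide by $\sqrt{m}$ in the converse matches the paper's use of $m-4=(b/m)^2m$, so there is no gap.
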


\begin{proof}
  Assume that $m, m - 4, m(m-4) \notin \QQ^{*2}$. To see that $[\QQ(\sqrt{m}, \sqrt{m - 4}) : \QQ] = 4$ it suffices to verify that $m - 4$ is not a square of $\QQ(\sqrt{m})^*$. If $m - 4$ was an element of $\QQ(\sqrt{m})^{*2}$, then $\sqrt{m - 4} = a + b\sqrt{m}$ for some $a, b \in \QQ$. But then $m - 4 = a^2 + mb^2 + 2ab\sqrt{m}$ and thus $a = 0$ or $b = 0$ since $m \notin \QQ^{*2}$. It is now an elementary check that $a = 0$ would imply $m(m-4) \in \QQ^{*2}$ while $b = 0$ would imply $m - 4 \in \QQ^{*2}$, both of which contradict the assumption $m, m - 4, m(m-4) \notin \QQ^{*2}$.

  Assume now that $[\QQ(\sqrt{m}, \sqrt{m - 4}) : \QQ] = 4$. It is clear that $m, m - 4 \notin \QQ^{*2}$. Moreover, the assumption implies that $m - 4 \notin \QQ(\sqrt{m})^{*2}$. If $m(m -4) \in \QQ^{*2}$, then clearly $m - 4 = (b/m)^2m$ for some $b \in \QQ^*$ and thus $m - 4 \in \QQ(\sqrt{m})^{*2}$, a contradiction.
\end{proof}

The relation between the values of the local invariant maps of $\alpha$, $\alpha_{i, -}$ and the corresponding Hilbert symbols will be of great use to us. It is given by
\begin{equation}
  \label{eqn:invp-hilbert-symbol}
  \begin{split}
    \inv_p \alpha_{i, -} (x_0, x_1, x_2, x_3) 
    &= \frac{1 - (x_i/x_0 - 2, m - 4)_p}{4}, \quad i =1,2,3, \\
    \inv_p \alpha (x_0, x_1, x_2, x_3) 
    &= \frac{1 - (x_i^2/x_0^2 - 4, m - 4)_p}{4}, \quad i =1,2,3,
  \end{split}
\end{equation}
Let $\(\frac{\cdot}{\cdot}\)$ be the Legendre symbol. If $a, b \in \ZZ$ and $\mu, \eta \in \Zp^{\times}$, the explicit formulae for the Hilbert symbol \cite[Thm.~1. p.20]{serrearithmetic} for any finite prime $p$ are given by
\begin{equation}
  \label{eqn:hilbert-symbol}
  \begin{split}
    (p^a\mu, p^b \eta)_p &=
    (-1)^{\frac{p - 1}{2}ab} \(\frac{\mu}{p}\)^b \(\frac{\eta}{p}\)^a \quad \text{if $p$ is odd and}\\
    (p^a\mu, p^b \eta)_2 &=
    (-1)^{\frac{\mu - 1}{2}\frac{\eta - 1}{2} + b\frac{\mu^2 - 1}{8} + a\frac{\eta^2 - 1}{8}}.
  \end{split}
\end{equation}

We proceed with an in-depth analysis of the values of the local invariant maps of $\alpha_{i, -}$ at all places of $\QQ$.

\begin{lemma}
\label{lem:inv-at-infinity}
  \text{ }
  \begin{enumerate}[label=\emph{(\roman*)}]
    \item If $m > 4$, then $\inv_{\infty} \alpha_{i,-}(\cdot)$ vanishes identically on $(\sX_m, \sDomega)_{\str}^*(\RR)$, $i = 1,2,3$. 
    \item If $m < 4$, $m \neq 0$, then $\inv_{\infty} \alpha_{i,-}(\cdot): (\sX_m, \sDomega)_{\str}^*(\RR)  \rightarrow \{0, 1/2\}$ surjects, $i = 1,2,3$. 
  \end{enumerate}
\end{lemma}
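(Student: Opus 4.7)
The plan is to compute $\inv_\infty \alpha_{i,-}$ directly from the explicit representative $\alpha_{i,-} = (x_i/x_0 - 2, m-4) \in \Br U_m$ via formula \eqref{eqn:invp-hilbert-symbol} and the real Hilbert symbol, which satisfies $(a,b)_\infty = -1$ if and only if both $a, b < 0$. Since $(\sX_m, \sDomega)_{\str}^*(\RR) = U_m(\RR)$, one may work with affine coordinates $u_i := x_i/x_0$ throughout, so that the invariant at a point is entirely determined by the signs of $u_i - 2$ and $m - 4$; and since the local invariant is continuous into the discrete group $\frac{1}{2}\ZZ/\ZZ$, it suffices to compute it on the (dense) locus where $u_i \neq 2$.

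Part (i) is then immediate: $m > 4$ gives $m - 4 > 0$, so $(u_i - 2, m - 4)_\infty = +1$ irrespective of the sign of $u_i - 2$, whence $\inv_\infty \alpha_{i,-}$ vanishes on all of $U_m(\RR)$.

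For part (ii), $m - 4 < 0$ so the sign of the Hilbert symbol is now governed solely by the sign of $u_i - 2$, and the surjectivity claim reduces to exhibiting, for each $i$, a real point of $U_m$ with $u_i > 2$ and one with $u_i < 2$. By the symmetry of \eqref{eqn:Um} in the three variables, it suffices to treat $i = 1$. The plan is to slice diagonally by $u_1 = u_2 = u_3 = t$, which reduces \eqref{eqn:Um} to the cubic $f(t) := t^3 - 3t^2 + m = 0$. For $0 < m < 4$, evaluating $f$ at $t = 0, 2, 3$ immediately produces real roots in both $(0, 2)$ and $(2, 3)$, directly supplying diagonal real points with $u_1 < 2$ and $u_1 > 2$ respectively. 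For $m < 0$, the cubic still has a (unique) real root $t > 3$, giving a point with $u_1 > 2$.

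The main obstacle is the complementary real point for $m < 0$, namely one with $u_1 < 2$. An AM-GM argument applied to the identity $u_1 u_2 u_3 = u_1^2 + u_2^2 + u_3^2 - m > 0$ forced by $m < 0$ shows that in fact $|u_i| > 2$ for every real point of $U_m$, which makes the required point appear unattainable at first glance. The key observation bypassing this is that \eqref{eqn:Um} is invariant under negating any two of the variables, so from the diagonal solution $(t, t, t)$ with $t > 3$ one obtains $(-t, t, -t),\ (t, -t, -t),\ (-t, -t, t) \in U_m(\RR)$. The first of these has $u_1 = -t < 2$, which handles $i = 1$, and cycling the sign flip produces the analogous points for $i = 2, 3$. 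Recognising that the AM-GM inequality bounds $|u_i|$ but not $u_i$, and exploiting the sign symmetry of the Markoff equation to supply the missing invariant value, is the crux of the argument.
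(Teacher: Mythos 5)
Your proof is correct and follows essentially the same strategy as the paper: reduce to the sign formula for the real Hilbert symbol via \eqref{eqn:invp-hilbert-symbol}, so that (i) is immediate from $m-4>0$, and (ii) amounts to exhibiting real points of $U_m$ with $u_i-2$ of either sign. The only difference is the choice of witness points: the paper slices by $u_3=4$, reducing to the indefinite conic $x^2-3y^2=m-16$, which produces points with $u_1>2$ and $u_1<-2$ uniformly for all $m<4$, whereas you slice along the diagonal $u_1=u_2=u_3=t$, which forces a case split ($0<m<4$ versus $m<0$) and, for $m<0$, the extra observation that negating two coordinates preserves \eqref{eqn:Um}; both routes are equally elementary and both finish by the coordinate symmetry for $i=2,3$. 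One small caveat: your parenthetical claim that AM--GM applied to $u_1u_2u_3=u_1^2+u_2^2+u_3^2-m>0$ yields $|u_i|>2$ for all $i$ is not quite what AM--GM gives (it gives $u_1u_2u_3>27$); the correct quick argument is that $|u_3|\le 2$ would force $0\le(|u_1|-|u_2|)^2\le u_1^2+u_2^2-u_1u_2u_3=m-u_3^2<0$. Since this remark only motivates the sign-flip trick and is not used in the argument, it does not affect the validity of the proof.
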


\begin{proof}
  The proof of (i) is clear. Indeed, $m - 4 > 0$ is a square in $\RR^*$ and the claim follows from \cite[Thm.~1, p.20]{serrearithmetic} and \eqref{eqn:invp-hilbert-symbol}.

  Assume now that $m < 4$ is non-zero. Fix $u_3 = 4$. Then $u_1^2 + u_2^2 - 4u_1u_2$ is indefinite as it takes the shape $x^2 - 3y^2$ under the linear change $u_2 - 2u_1 = x$, $u_1 = y$. Then \eqref{eqn:Um} becomes
  \[
    x^2 - 3y^2 = m - 16,
  \]
  which has the obvious $\RR$-points $(x, y) = (0, \pm \sqrt{(16 - m)/3})$. As $m < 4$ we clearly have $\sqrt{(16 - m)/3} > 2$ thus producing $M_{\infty}, N_{\infty} \in U_m(\RR)$ with $u_1 > 2$ and $u_1 < -2$, respectively. We may now apply \cite[Thm.~1, p.20]{serrearithmetic} and \eqref{eqn:invp-hilbert-symbol} to see that $\inv_{\infty} \alpha_{1, -}(M_{\infty}) = 0$ and $\inv_{\infty} \alpha_{1, -}(N_{\infty}) = 1/2$. The claim for $\alpha_{i, -}$, $i = 2,3 $ now follows from an application of an automorphism of $U_m$ that swaps $u_1$ with $u_i$. This completes the proof.  
\end{proof}

\begin{proposition} 
  \label{prop:inv_p}
  Let $p$ be a finite prime and assume that at least one of $\omega_1$, $\omega_2$, $\omega_3$ is finite. Then there exists $M_p \in (\sX_m, \sDomega)_{\str}^*(\Zp) \setminus \sU_m(\Zp)$, such that 
  \[
    \begin{split}
      \inv_p \alpha_{1, -}(M_p) = 0, \quad 
      \inv_p \alpha_{2, -}(M_p) = 0, \quad 
      \inv_p \alpha_{3, -}(M_p) = 0.
    \end{split}
  \]
  Moreover, if $\omega_i$ is finite and $p$ divides $m - 4$ to odd multiplicity, then there is another point $N_p \in (\sX_m, \sDomega)_{\str}^*(\Zp) \setminus \sU_m(\Zp)$, for which
  \[
    \inv_p \alpha_{i, -} (N_p)  = 1/2.
  \]
\end{proposition}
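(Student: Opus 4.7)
The plan is to construct the required points explicitly by perturbing rational boundary points of $X_m$ and then evaluate invariants via the Hilbert symbol formulae \eqref{eqn:invp-hilbert-symbol}--\eqref{eqn:hilbert-symbol}. Without loss of generality assume $\omega_1$ is finite. The trivial case of Part 1 is $m - 4 \in \Qp^{*2}$: every Hilbert symbol $(\cdot, m - 4)_p$ is identically $1$, so all three invariants vanish on any $\Qp$-point of $U_m$ and the existence of a strict semi-integral point is given by Proposition~\ref{prop:semi-integral-criterion-2-inf}.

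For Part 1 in the case $m - 4 \notin \Qp^{*2}$, I perturb the rational point $(0:0:1:1) \in X_m(\QQ)$. Working in the affine chart $x_2 = 1$, set $x_0 = p^a$ and solve the Markoff equation perturbatively in $p^a$, introducing a free parameter $b \in \Zp$ through $x_3 = 1 + bp^a + O(p^{2a})$; a direct calculation yields $x_1 = 2p^a + (b^2 + 4 - m)p^{3a} + O(p^{4a})$, whence $u_1 - 2 = p^{2a}\bigl(b^2 - (m - 4)\bigr)(1 + O(p^a))$ and $u_i - 2 = p^{-a}(1 + O(p^a))$ for $i = 2, 3$. I choose $a$ to be a sufficiently large even integer (the smallest multiple of the relevant weight constraint that is even, and at least $4$ when $p = 2$) and pick $b$ so that $b^2 - (m-4) \in \Zp^\times$, which is possible since $m - 4$ is not a square; then $n_p(\sD_1, M_p) = a$ is admissible and $n_p(\sD_2, M_p) = n_p(\sD_3, M_p) = 0$, so $M_p$ is strict semi-integral. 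Computing each $(u_i - 2, m - 4)_p$ by bilinearity: the $(p^{\pm a}, m - 4)_p$ factor is trivial because $a$ is even; the unit corrections are $\equiv 1 \bmod p$ (respectively $\bmod 8$ for $p = 2$) and hence local squares by Hensel; and the factor $b^2 - (m-4) = \N_{\Qp(\sqrt{m - 4})/\Qp}(b + \sqrt{m - 4})$ is a norm, so $(b^2 - (m-4), m-4)_p = 1$. Combining these gives $\inv_p \alpha_{i, -}(M_p) = 0$ for $i = 1, 2, 3$.

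For Part 2 take $i = 1$ and assume $\val_p(m - 4)$ is odd, so that $\Qp(\sqrt{m - 4})/\Qp$ is ramified and $(\cdot, m - 4)_p$ is non-trivial on $\Zp^\times$. Since the family of Part 1 always lands in the norm group, I switch to integral points. Over $\Fp$ the fibre $\{u_1 = c\}$ of $\sU_m$ is a plane conic in $(u_2, u_3)$ with discriminant $(c^2 - 4)(u_3^2 - 4)$, so smooth $\Fp$-points with any prescribed $u_1 = c \neq \pm 2$ exist for all odd $p$ by letting $u_3$ range and applying Hensel to the resulting smooth solution. I select $c \in \Fp$ such that $c - 2$ is a quadratic non-residue modulo $p$; the Hensel lift produces $N_p \in \sU_m(\Zp) \subset (\sX_m, \sDomega)_{\str}^*(\Zp)$ whose $u_1 - 2 \in \Zp^\times$ lies in the same non-residue class, and a direct Hilbert symbol computation yields $(u_1 - 2, m - 4)_p = ((u_1 - 2)/p) = -1$ using that $\val_p(m - 4)$ is odd, hence $\inv_p \alpha_{1, -}(N_p) = 1/2$. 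The case $p = 2$ is handled in the same spirit by replacing quadratic residue conditions mod $p$ with the classification of $\ZZ_2^\times/\ZZ_2^{*2}$, and by falling back on Proposition~\ref{prop:semi-integral-criterion-2-inf} to supply a non-integral strict semi-integral point when $\sU_m(\ZZ_2) = \emptyset$.

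The main obstacle I anticipate is the prime $p = 2$: Hilbert symbols depend on residues modulo $8$, $\ZZ_2^\times/\ZZ_2^{*2}$ has order $4$, and the ramification of $\QQ_2(\sqrt{m - 4})/\QQ_2$ splits into several subcases depending on $m \bmod 16$, forcing a careful case-by-case treatment alongside the verification that the chosen point remains strict semi-integral. A secondary technicality is handling the smallest odd primes $p = 3, 5$ in Part 2, where the limited set of quadratic non-residues may constrain the existence of smooth $\Fp$-points on $\sU_m$ with the prescribed residue of $u_1 - 2$; this will be verified by direct inspection of those bounded cases.
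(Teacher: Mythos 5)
Your argument for the first part is correct but takes a genuinely different route from the paper's. You perturb the rational boundary point $(0:0:1:1)$ with $x_0 = p^a$ and a free parameter $b$, arrive at $u_1 - 2 = p^{2a}(b^2 - (m-4))(1 + O(p^a))$, and observe that $b^2 - (m-4) = \N_{\Qp(\sqrt{m-4})/\Qp}(b + \sqrt{m-4})$ lies in the norm group, forcing $(u_1 - 2, m-4)_p = 1$; the entries $u_2 - 2, u_3 - 2$ contribute trivially because $a$ is even and the unit parts are $1 \bmod p^a$. The paper instead fixes $v = w$ in the scaled equation \eqref{eqn:solutions-mod-p}, applies Hensel in $u$, and reads off $(1, m-4)_p = 1$ directly. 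Both are valid; yours makes the norm-group structure explicit and arguably explains more transparently why the invariants must vanish, while the paper's is shorter.

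The second part has a genuine gap at $p = 2$. Your plan is to produce $N_p \in \sU_m(\Zp)$ with $u_1 - 2 \in \Zp^\times$ a prescribed non-square and, "when $\sU_m(\ZZ_2) = \emptyset$, fall back on Proposition~\ref{prop:semi-integral-criterion-2-inf}." But that proposition only asserts non-emptiness of $(\sX_m, \sDomega)^*_{\str}(\ZZ_2) \setminus \sU_m(\ZZ_2)$; it gives no control on the invariant. Worse, the points it constructs have $x_0 = p^{\omega_1}$, $x_1 = y_1 p^{\omega_1}$ with $y_1 \equiv 2 \bmod p$, so $u_1 - 2 = y_1 - 2 \notin \Zp^\times$ and the Hilbert-symbol computation you rely on does not even apply. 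Moreover, the problematic case really occurs: if $m \equiv -4 \bmod 32$ then $\val_2(m-4) = 3$ is odd and $\sU_m(\ZZ_2) = \emptyset$. The paper avoids this entirely by always working with non-integral strict semi-integral points $(p^{2\omega_1}, u p^{2\omega_1}, v, w)$ (for $p=2$: $(2^{2\omega_1}, u\cdot 2^{2\omega_1+1}, 1, 5)$), choosing $u - 2$ to be a non-square unit, using Hasse--Weil on the associated plane quartic for $p > 37$ and explicit residue choices for smaller primes. Your odd-prime argument via the conic fibre $\{u_1 = c\}$ can be salvaged with the extra care you anticipate (and you would still have to verify that $c$ can be chosen avoiding the degeneracy $c^2 \equiv 4$ or $c^2 \equiv m \bmod p$, which already fails at $p = 3$ and forces a separate analysis there), but $p = 2$ needs an explicit non-integral construction along the paper's lines rather than a fall-back.
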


\begin{proof}
  Without loss of generality we may assume that $\omega_1$ is finite. We claim that for a suitably chosen $v \in \Zp^{\times}$ there exist
  \[
    M_p, N_p \in X_m(\Qp) \text{ of the shape } 
    \begin{cases}
      (p^{2\omega_1}, up^{2\omega_1}, v, 1)  &\text{with } u \in \Zp \text{ if } p > 2, \\
      (2^{2\omega_1}, u2^{2\omega_1 + 1}, v, 1)  &\text{with } u \in \ZZ_2^{\times} \text{ if } p = 2.
    \end{cases}
  \]
  Any such point belongs to $(\sX_m, \sDomega)_{\str}^*(\Zp)$ by Proposition~\ref{prop:semi-integral-criterion} but clearly is not in $\sU_m(\Zp)$. The fixed value of $v$ will guarantee that the assumption on the local invariant maps at $M_p$ and $N_p$ are fulfilled. 

  For any odd $p$ substitute $(p^{2\omega_1}, up^{2\omega_1}, v, 1)$ in \eqref{eqn:Xm}. Dividing through $p^{2\omega_1}$ then gives
  \begin{equation}
    \label{eqn:solutions-mod-p}
    p^{4\omega_1}u^2 + v^2 + 1 - uv - mp^{4\omega_1} = 0.
  \end{equation}
  For $M_p$ one may take $v = 1$ and for $N_p$ we fix $v \in \Zp^{\times} \setminus \Zp^{\times 2}$. The equation \eqref{eqn:solutions-mod-p} mod $p$ is solvable with $u \equiv v + v^{-1} \bmod p$. Since $v \in \Zp^{\times}$ the partial derivative with respect to $u$ does not vanish mod $p$ and therefore Hensel's lemma is applicable. It gives a unique lift in $ \Zp$ of $u \bmod p$ solving \eqref{eqn:solutions-mod-p} and thus shows the existence of $M_p$ and $N_p \in X_m(\Qp)$ of the desired shape.

  If $p = 2$, the argument is identical. 
  We take $v = 1$ for $M_p$ and $v = 5$ for $N_p$. Then we substitute $(2^{2\omega_1}, u2^{2\omega_1 + 1}, v, 1)$ in \eqref{eqn:Xm} and divide through $2^{2\omega_1}$. This gives
  \[
    2^{4\omega_1 + 2}u^2 + v^2 + 1 - 2uv - m2^{4\omega_1} = 0.
  \]
  As $v^2 \equiv 1 \bmod 8$, this is soluble mod 8 under the assumption $uv \equiv 1 \bmod 4$. This time the partial derivative with respect to $u$ has a $2$-adic valuation equal to 1 and thus Hensel's lemma is once again applicable, therefore verifying the existence of $2$-adic semi-integral points $M_p, N_p$ of the claimed shape.

  We continue our analysis with no restriction on $p$ other than $p < \infty$. The values of the local invariant maps follow from \eqref{eqn:invp-hilbert-symbol} and \eqref{eqn:hilbert-symbol}. At $M_p$ they are
  \[
    \inv_p \alpha(M_p) 
    = \inv_p \alpha_{2, -}(M_p) 
    = \inv_p \alpha_{3, -}(M_p) 
    = \frac{1 - (1, m - 4)_p}{4}
    = 0.
  \]
  In view of \eqref{eqn:alpha-alpha_i} and the values of the local invariant map of $\alpha$, $\alpha_{2, -}$ and $\alpha_{3, -}$ at $M_p$, we conclude that $\inv_p \alpha_{1, -}(M_p) = 0$. The claimed values of the local invariant maps at $N_p$ follow identically since 
  \[
    \begin{split}
      \inv_p \alpha_{2, -}(N_p) = 1/2 \quad \text{and} \quad 
      \inv_p \alpha(N_p) = \inv_p \alpha_{3, -}(N_p) = 0. 
    \end{split}
  \]
  This completes the proof.
\end{proof}

Recall that $\Br(X_m, \Domega)$ was defined in \cite{MNS22} as
\[
  \Br(X_m, \Domega)
  = \{\beta \in \Br U_m \ : \ \omega_i \partial_{D_{i,\fin}}(\beta)=0 \text{ when } \omega_i \neq \infty\}.
\]

\begin{lemma}
  \label{lem:alpha_i-Br-X-D}
  If $\omega_i$ is the only finite weight, then $\alpha_{i, -} \in \Br (X_m \setminus D_{\inf})$. If, moreover, $\omega_i$ is even, then also $\alpha_{i, -} \in \Br (X_m, \Domega)$.
\end{lemma}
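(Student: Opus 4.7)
The plan is to compute the divisor of $f_i := x_i/x_0 - 2$ on $X_m$, read off $\val_{D_i}(f_i)$, and apply the tame symbol formula to the quaternion symbol $\alpha_{i,-} = (f_i, m - 4)$, whose second argument is a constant. By the symmetry of \eqref{eqn:Xm} it suffices to treat $i = 1$. Decomposing $f_1 = (x_1 - 2x_0)/x_0$, equation \eqref{eqn:Xm} gives $\{x_0 = 0\} \cap X_m = D_1 \cup D_2 \cup D_3$, while substituting $x_1 = 2x_0$ into \eqref{eqn:Xm} yields the factorisation
\[
  x_0 \bigl[(x_2 - x_3)^2 + (4 - m)\, x_0^2\bigr] = 0,
\]
so $\{x_1 - 2x_0 = 0\} \cap X_m = D_1 \cup E_1$ where $E_1 := \{x_1 = 2x_0,\ (x_2 - x_3)^2 + (4-m)\, x_0^2 = 0\}$ is a plane conic. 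Since $\deg X_m = 3$, while each $D_l$ is a line and $E_1$ has degree $2$, a degree count forces all multiplicities in both divisors to equal one, giving $\divv(f_1) = E_1 - D_2 - D_3$ and hence $\val_{D_1}(f_1) = 0$, $\val_{D_2}(f_1) = \val_{D_3}(f_1) = -1$, $\val_{E_1}(f_1) = 1$.

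Next I would verify that $\alpha_{1,-}$ genuinely lies in $\Br U_m$ by checking that its only potentially non-trivial residue on a divisor of $U_m$, namely along $E_1$, vanishes. Since $E_1$ is a geometrically reducible conic whose two geometric components are Galois-conjugate over $\QQ(\sqrt{m - 4})$, we have $\QQ(\sqrt{m - 4}) \subseteq k(E_1)$, so $m - 4$ is a square in $k(E_1)$ and the tame symbol formula gives $\partial_{E_1}(\alpha_{1,-}) = (m - 4)^{-1} \equiv 1 \bmod k(E_1)^{*2}$. (The case $m - 4 \in \QQ^{*2}$ is immediate.)

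For the first assertion, the tame symbol formula applied with the constant $m - 4$ reduces to $\partial_D(\alpha_{1,-}) = (m - 4)^{-\val_D(f_1)} \bmod k(D)^{*2}$ for every prime divisor $D$ of $X_m$. At $D = D_1$ this is trivial thanks to $\val_{D_1}(f_1) = 0$. Since $D_1 = D_{1, \fin}$ is the unique irreducible boundary component of $U_m$ lying inside $X_m \setminus D_{\inf}$, purity for the Brauer group on the smooth variety $X_m \setminus D_{\inf}$ gives $\alpha_{1,-} \in \Br(X_m \setminus D_{\inf})$.

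For the second assertion, the defining condition $\omega_1 \cdot \partial_{D_{1, \fin}}(\alpha_{1,-}) = 0$ of $\Br(X_m, \Domega)$ is automatic once the stronger vanishing $\partial_{D_{1, \fin}}(\alpha_{1,-}) = 0$ is established in the first part. The evenness of $\omega_1$ therefore appears superfluous given the direct residue computation; an alternative approach that bypassed $\val_{D_1}(f_1) = 0$ and only exploited the order-dividing-$2$ nature of $\alpha_{1,-}$ would, by contrast, need $\omega_1$ even in order to absorb a possibly $2$-torsion residue. The main delicate point of the plan is the multiplicity bookkeeping in the divisor computation: confirming that $D_1$ appears with multiplicity exactly one in both $\divv(x_0)$ and $\divv(x_1 - 2x_0)$, since it is precisely the resulting cancellation $\val_{D_1}(x_1 - 2x_0) - \val_{D_1}(x_0) = 1 - 1$ that forces $\val_{D_1}(f_1) = 0$.
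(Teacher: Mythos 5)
Your argument is correct and essentially parallels the paper's, but you fill in a step that the paper delegates to a citation. For the first assertion, the paper invokes the residue computation in the proof of \cite[Prop.~4.5]{LM21} (which asserts precisely that $\alpha_{i,-}$ has trivial residue along $D_i$) and then applies purity; you instead compute $\divv(f_i)$ from scratch by factoring $\{x_1 - 2x_0 = 0\}\cap X_m = D_1 + E_1$ and $\{x_0=0\}\cap X_m = D_1 + D_2 + D_3$, read off $\val_{D_1}(f_1)=0$ by cancellation, and apply the tame symbol formula. The divisor bookkeeping is right: both hyperplane sections have total degree $3$, the $D_l$ are distinct lines and $E_1$ is a conic, so all multiplicities are forced to be $1$ and your cancellation at $D_1$ is valid. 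Your aside verifying the residue along $E_1$ is a nice sanity check that $\alpha_{1,-}$ is unramified on $U_m$, which the paper takes as known.

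For the second assertion you make an observation the paper does not: once $\partial_{D_{i,\fin}}(\alpha_{i,-}) = 0$ is established, the membership condition $\omega_i\,\partial_{D_{i,\fin}}(\alpha_{i,-})=0$ defining $\Br(X_m,\Domega)$ holds for any $\omega_i$, so the evenness hypothesis is logically superfluous. The paper instead gives an independent argument for this part — that $\alpha_{i,-}$ has order dividing $2$, so its residue is $2$-torsion and is killed by any even $\omega_i$ — which is precisely the "alternative approach" you identify. Both arguments are valid; yours is sharper (it shows $\Br(X_m\setminus D_{\inf})\subset\Br(X_m,\Domega)$ holds with no parity condition) while the paper's is more robust in that it would survive even if the residue were a nonzero $2$-torsion class.
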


\begin{proof}
  We have $\Br U_m/\Br \QQ = \langle \alpha_{1,-}, \alpha_{2,-}, \alpha_{3,-} \rangle$ by \cite[Prop.~4.5]{LM21}. As $D_{\inf} = D_j \cup D_k$, $j, k$ distinct in $\{1, 2, 3\} \setminus \{i\}$, we have $U_m = (X_m \setminus D_{\inf}) \setminus D_i$. Then purity \cite[Thm~3.7.1]{colliotsko_2021} gives
  \[
    \Br (X_m \setminus D_{\inf}) \rightarrow \Br U_m \rightarrow H^1(\QQ(D_i), \QQ / \ZZ).
  \]  
  The fact that $\alpha_{i, -}$ has a trivial residue along $D_i$ is explained in the proof of \cite[Prop.~4.5]{LM21}. Hence $\alpha_{i, -}$ belongs to $\Br (X_m \setminus D_{\inf})$. As $\alpha_{i, -} \in \Br U_m$ is of order 2, it belongs to $\Br (X_m, \Domega)$ by definition if $\omega_i$ is even. This completes the proof.
\end{proof}

\begin{remark}
  \label{rem:si-Br-comparison}
  The above proof combined with the proof of \cite[Prop.~2.4]{LM21} shows, in fact, that $\Br (X_m \setminus D_{\inf}) / \Br \QQ \cong (\ZZ/2\ZZ)^2 = \langle \alpha_{i,-}, \alpha_{i,+} \rangle$, where $\alpha_{i,+} = (x_i/x_0 + 2, m - 4)$. Additionally, if \eqref{eqn:star} holds, then $\Br (X_m \setminus D_{\inf}) = \Br (X_m, \Domega)$ in the case of even $\omega_i$.
\end{remark}

\begin{proposition}
  \label{prop:BM-set-alpha_i}
  If $m - 4 \notin \QQ^{*2}$ and $\omega_i$ is finite, then 
  \[
    (\sX_m, \sDomega)_{\str}^*(\AA_\QQ)^{\alpha_{i, -}} 
    \subsetneq (\sX_m, \sDomega)_{\str}^*(\AA_\QQ).
  \]
\end{proposition}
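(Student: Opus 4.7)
The inclusion is clear from the definition of the Brauer--Manin set, so the content is strictness. My plan is to construct a strict adelic semi-integral point $(P'_v) \in (\sX_m, \sDomega)_{\str}^*(\AA_\QQ)$ whose local invariants of $\alpha_{i,-}$ sum to a non-zero element of $\QQ/\ZZ$. The strategy is to assemble a base strict adelic semi-integral point with all local invariants zero and then modify a single component to flip one invariant to $1/2$.

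For the base point $(P_v)$: at almost all primes $p$ choose $P_p \in \sU_m(\Zp)$, which is non-empty by the Lang--Weil estimates and Hensel's lemma since $U_m$ is smooth and geometrically integral. Such $P_p$ is strict semi-integral with $\inv_p \alpha_{i,-}(P_p) = 0$, as $\alpha_{i,-}$ is unramified outside a finite set of places in the sense of \cite[Prop.~13.3.1]{colliotsko_2021}. At the remaining finitely many primes use Proposition~\ref{prop:inv_p} to select $P_p \in (\sX_m,\sDomega)_{\str}^*(\Zp)$ with $\inv_p \alpha_{i,-}(P_p) = 0$. At infinity use Lemma~\ref{lem:inv-at-infinity}, which provides $P_\infty$ with $\inv_\infty \alpha_{i,-}(P_\infty) = 0$ in both regimes $m > 4$ and $m < 4$. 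Then $\sum_v \inv_v \alpha_{i,-}(P_v)$ is a finite sum equal to $0$.

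Next I flip one local invariant to $1/2$ by exploiting $m - 4 \notin \QQ^{*2}$. Since $m - 4$ is a non-zero integer that is not a rational square, either $m - 4 < 0$, or some prime $q$ divides $m - 4$ to odd multiplicity. In the first case, Lemma~\ref{lem:inv-at-infinity}(ii) supplies $Q_\infty \in (\sX_m, \sDomega)_{\str}^*(\RR)$ with $\inv_\infty \alpha_{i,-}(Q_\infty) = 1/2$; in the second, the moreover-part of Proposition~\ref{prop:inv_p} (whose hypothesis that $\omega_i$ is finite is precisely the assumption of our proposition) supplies $Q_q \in (\sX_m,\sDomega)_{\str}^*(\Zq)$ with $\inv_q \alpha_{i,-}(Q_q) = 1/2$. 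Replacing the corresponding component of $(P_v)$ by this new point yields $(P'_v)$ with $\sum_v \inv_v \alpha_{i,-}(P'_v) = 1/2 \neq 0$ in $\QQ/\ZZ$, so $(P'_v) \notin (\sX_m, \sDomega)_{\str}^*(\AA_\QQ)^{\alpha_{i,-}}$.

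The conceptual crux is the elementary dichotomy that $m - 4 \notin \QQ^{*2}$ forces either negativity or a prime divisor of odd multiplicity, which are precisely the two scenarios in which Lemma~\ref{lem:inv-at-infinity}(ii) and the second half of Proposition~\ref{prop:inv_p} permit flipping the local invariant of $\alpha_{i,-}$; all else is routine adelic bookkeeping, with finiteness of the invariant sum guaranteed by the unramifiedness of $\alpha_{i,-}$ at almost all places. I anticipate no serious obstacle, since both crucial ingredients (local control at primes and at infinity) have already been secured in the previous lemma and proposition.
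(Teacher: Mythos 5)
Your proof is correct and follows essentially the same route as the paper: zero local invariants of $\alpha_{i,-}$ at all but one place (via Proposition~\ref{prop:inv_p} and Lemma~\ref{lem:inv-at-infinity}), then a flip to $1/2$ at the archimedean place if $m-4<0$, or at a prime dividing $m-4$ to odd multiplicity otherwise, exactly the dichotomy the paper exploits. Your additional care in taking points of $\sU_m(\Zp)$ at almost all primes (Lang--Weil plus Hensel, with unramifiedness of $\alpha_{i,-}$) is a minor refinement that makes the restricted-product condition of Definition~\ref{def:-semi-integral-adeles} explicit, which the paper's proof leaves implicit.
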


\begin{proof}
  If $m - 4 \notin \QQ^{*2}$, then either $m - 4 < 0$ or else $m - 4 > 0$ and there exists a prime $q$ such that $\val_q(m - 4)$ is odd. If $m - 4 < 0$ we can apply Lemma~\ref{lem:inv-at-infinity} to find $N_\infty \in (\sX_m, \sDomega)_{\str}^*(\RR)$ with $\inv_{\infty} \alpha_{i, -}(N_\infty) = 1/2$. On the other hand, if $\val_q(m - 4)$ is odd, there is $N_q \in (\sX_m, \sDomega)_{\str}^*(\ZZ_q)$ with $\inv_q \alpha_{i, -}(N_q) = 1/2$ by Proposition~\ref{prop:inv_p}. For each remaining place of $\QQ$ there is $M_p \in (\sX_m, \sDomega)_{\str}^*(\Zp)$ such that $\inv_p \alpha_{i, -}(M_p) = 0$ by Proposition~\ref{prop:inv_p}. Thus we get an adele whose sum of local invariant maps for $\alpha_{i, -}$ is 1/2, which shows the claim.
\end{proof}

\begin{remark}
  \label{rem:m}
  Observe that $m - 4 \in \QQ(\sqrt{m})^{*2}$ is equivalent to $m - 4 \in \QQ^{*2}$. Any element of $\QQ(\sqrt{m})^{*2}$ is either in $\QQ^{*2}$ or of the shape $a^2m/b^2$ for coprime $a, b \in \ZZ_{> 0}$. Rewriting $m - 4 = a^2m/b^2$ gives $m(b - a)(b + a) = 4b^2$. As $(a, b) = 1$, then $b^2$ divides $m$ and thus $(m/b^2)(b - a)(b + a) = 4$. This yields $a = 0$. 
\end{remark}

We are now ready to prove our main results on local-global principles. 

\begin{proof}[Proof of Theorem~\ref{thm:wa-Xm}]
  Assume that $m - 4 \in \QQ^{*2}$. Then $X_m$ is rational by \cite[Lem.~3.3.]{LM21}. Weak approximation is a birational invariant of smooth projective varieties and therefore $X_m$ satisfies that property.

  Assume now that $m - 4 \notin \QQ^{*2}$ and thus $X_m$ is not rational by \cite[Lem.~3.3.]{LM21} and Remark~\ref{rem:m}. If $m - 4 < 0$, then in view of \eqref{eqn:alpha-alpha_i} and Lemma~\ref{lem:inv-at-infinity} we may find $M_\infty, N_\infty \in X_m(\RR)$ such that $\inv_{\infty} \alpha(M_\infty) = 0$ and $\inv_{\infty} \alpha(N_\infty) = 1/2$. On the other hand, if $m - 4 > 0$, then there exists a prime $p$ such that $\valp(m - 4)$ is odd. For such $p$ there is $M_p \in X_m(\Qp)$ with $\inv_p \alpha(M_p) = 0$ by Proposition~\ref{prop:inv_p}. There is also $N_p \in X_m(\Qp)$ with $\inv_p \alpha(N_p) = 1/2$. This follows from \cite[Prop.~5.5]{LM21} for $p > 5$, from \cite[Prop.~5.7]{LM21}, whose proof holds in the general case of $m - 4$ with odd $p$-adic valuation, if $p = 3, 5$ and from \cite[Lem.~5.8]{LM21} for $p = 2$. Thus, the local invariant map of the generator $\alpha$ of $\Br X_m/ \Br \QQ$ surjects at $p$ and there is a Brauer--Manin obstruction to weak approximation.
\end{proof}

\begin{proof}[Proof of Theorem~\ref{thm:wa}]
  Part (i) follows from Theorem~\ref{thm:wwa-general-pairs}. 

  To see (ii), we may assume without loss of generality that $\omega_1$ is finite and $\omega_2, \omega_3$ are infinite. Then $D_{\fin} = D_{1, \fin}$ is isomorphic to $\mathbb{G}_m$ and satisfies weak approximation for rational points as it is open in $\PP^1$ and $\PP^1$ satisfies that property. If $m - 4 \in \QQ^{*2}$ so does $X_m$ by Theorem~\ref{thm:wa-Xm} and as $U_m$ is open in $X_m$ we conclude that $U_m$ satisfies weak approximation for rational points. The semi-integral conditions are open conditions and thus both strict and non-strict semi-integral weak approximation hold, giving the claim for $m - 4 \in \QQ^{*2}$.

  We claim that there is a Brauer--Manin obstruction to strict semi-integral weak approximation if $m - 4 \notin \QQ^{*2}$. This is indeed the case since the closure of $(\sX_m, \sDomega)_{\str}^*(\AA_\QQ)^{\Br}$ in the product topology is a strict subset of the adelic space. The later follows from \cite[Prop.~3.19(ii)]{MNS22} and Proposition~\ref{prop:BM-set-alpha_i} provided that $\alpha_{i, -}$ belongs to $\Br (X_m \setminus D_{\inf})$ in the Campana case or $\alpha$ belongs to $\Br (X_m, \Domega)$ in the Darmon case. The last claim is shown in Lemma~\ref{lem:alpha_i-Br-X-D}, which completes the proof.
\end{proof}

\begin{proof}[Proof of Theorem~\ref{thm:sa}]
  We begin by showing that all elements of $\Br U_m$ pair to zero with $(\sX_m, \sDomega)_{\str}^*(\Zp)$ for all $p \in S(m)$. Indeed, $m - 4 \in \Qp^{*2}$ and thus $\Br (U_m \times_{\QQ} \Qp) / \Br \Qp$ is trivial which verifies that claim. As $T \subset S(m)$ the strict inclusion of adeles that pair to zero with $\alpha_{i, -}$ in the adelic space shown in Proposition~\ref{prop:BM-set-alpha_i} is preserved on projection away from $T$, hence strong approximation off $T$ fails. 
\end{proof}

\begin{proof}[Proof of Theorem~\ref{thm:BMO-hp}]
  The lack of algebraic Brauer--Manin obstruction follows from Proposition~\ref{prop:inv_p} as we can construct $(M_p) \in (\sX_m, \sDomega)_{\str}^*(\AA_\QQ)$ with zero local invariant maps for all generators of $\Br U_m / \Br \QQ$ at each $p \in \Omega_\QQ$. Thus $(M_p)$ belongs to the Brauer--Manin set verifying its non-emptiness. The fact that the transcendental Brauer group of $U_m$ is trivial under the arithmetic condition on $m$ given in the statement of Theorem~\ref{thm:BMO-hp} follows from \cite[Cor.~4.3]{LM21}.
\end{proof}

\section{Existence of semi-integral points on Markoff pairs}
\label{sec:quad-forms}
We will prove in this section our main results on the existence of families of Markoff orbifold pairs with strict semi-integral points for arbitrary weights which do not come from integral points. The main idea is to use the theory of binary quadratic forms over $\ZZ$ in connection with classical results from algebraic number theory. We refer to \cite{Cas78} for details on the theory of binary forms and their genus theory. For $D\in \ZZ, D \equiv 0,1 \bmod 4$ let $\Cl(D)$ be the class group of integral binary quadratic forms of discriminant $D$ given by $\GL_2(\ZZ)$-equivalence, and let $\Cl^+(D)$ for the corresponding narrow form class group given by $\SL_2(\ZZ)$-equivalence. Furthermore, let $\sG(D) \coloneq \Cl^+(D)/\Cl^+(D)^2$ be the associated genus group. Classical results imply that all these groups are always finite. Furthermore, for a fundamental discriminant $D$ the group $\Cl^+(D)$ is isomorphic to the narrow class group $\Cl^+(\QQ(\sqrt{D}))$ (See for example \cite[Thm~6.20]{buell_1989}). 

\begin{proposition}
  \label{prop:st-semi-integral-points}
  Assume that one of $\omega_i$ is finite. If $m = 6d$ with $d \equiv 29 \bmod 78$ and the reduction mod 13 of each prime divisor of $m - 121$ lies in $R = \{1, 3, 4, 9, 10, 12\} = (\ZZ/13\ZZ)^{\times 2} \subset (\ZZ/13\ZZ)^\times$, then $(\sX_m,\sDomega)_{\str}^*(\ZZ) \neq \emptyset$ but $\sU_m(\ZZ) = \emptyset$.
\end{proposition}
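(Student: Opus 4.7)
The plan is to prove the two assertions separately: the emptiness of $\sU_m(\ZZ)$ follows from a $3$-adic obstruction, while the non-emptiness of the strict semi-integral set is established by an explicit construction using the arithmetic of $\QQ(\sqrt{13})$. For $\sU_m(\ZZ) = \emptyset$, the congruence $d \equiv 29 \bmod 78$ forces $d \equiv 2 \bmod 3$, whence $m = 6d \equiv 3 \bmod 9$; a direct case analysis of $u_1^2 + u_2^2 + u_3^2 - u_1 u_2 u_3 \bmod 3$ shows that unless all three $u_i$ lie in $3\ZZ_3$, the left hand side is non-zero modulo $3$, contradicting $m \equiv 0 \bmod 3$, whereas if all $u_i \in 3 \ZZ_3$ the left hand side lies in $9 \ZZ_3$, contradicting $m \equiv 3 \bmod 9$. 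Hence $\sU_m(\ZZ_3) = \emptyset$, and a fortiori $\sU_m(\ZZ) = \emptyset$.

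For the existence of a strict semi-integral point, after permuting the coordinates (by the $S_3$-symmetry of the Markoff equation) I may assume $\omega_1 < \infty$, and set $k := \omega_1$. The explicit candidate is $M := (3^k : 11 \cdot 3^k : y_2 : y_3) \in \PP^3(\QQ)$; substituting into \eqref{eqn:Xm} reduces the condition $M \in \sX_m(\ZZ)$ to
\[
  y_2^2 - 11 y_2 y_3 + y_3^2 = (m - 121) \cdot 3^{2k},
\]
a representation problem for the form $Q(x, y) = x^2 - 11xy + y^2$ of discriminant $117 = 9 \cdot 13$. A computation modulo $3$ (using $-11 \equiv 1 \bmod 3$) shows that any representation with $\gcd(y_2 y_3, 3) = 1$ forces $y_2 \equiv y_3 \bmod 3$; writing $y_2 = y_3 + 3z$, the equation descends to
\[
  z^2 - 3 z y_3 - y_3^2 = (m - 121) \cdot 3^{2k - 2},
\]
which, up to change of variables, is the norm form of the ring of integers $\ZZ[\theta]$ of $\QQ(\sqrt{13})$ where $\theta := (1 + \sqrt{13})/2$.

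The next ingredient is classical: $\QQ(\sqrt{13})$ has class number one and the fundamental unit $\varepsilon := 1 + \theta$ has norm $-1$, so an integer is a norm from $\ZZ[\theta]$ precisely when each of its rational prime divisors is split or ramified in $\QQ(\sqrt{13})$. Quadratic reciprocity (with $13 \equiv 1 \bmod 4$) converts the hypothesis into the splitting statement for every prime $p \mid m - 121$, and elementary congruences on $m$ give $\gcd(m - 121, \, 2 \cdot 3 \cdot 13) = 1$. Thus $m - 121 = N(\alpha_0)$ for some $\alpha_0 \in \ZZ[\theta]$, and if $\pi$ is a prime of $\ZZ[\theta]$ above the split prime $3$ then $\alpha := \alpha_0 \bar\pi^{2k - 2}$ has $N(\alpha) = (m - 121) \cdot 3^{2k - 2}$, giving the desired representation.

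The hardest step, and the one I expect to be the main obstacle, is verifying that the resulting $y_3$ is coprime to $3$; this is required both to make the coordinates of $M$ coprime and for the semi-integrality check at $p = 3$. I would work in the decomposition $\ZZ[\theta]/3\ZZ[\theta] \cong \FF_3 \times \FF_3$, noting that $\pi \nmid \alpha_0$ because $3 \nmid m - 121$. Writing $\alpha = A + B \theta$, the reductions $\theta \equiv 0 \bmod \pi$ and $\theta \equiv 1 \bmod \bar\pi$ together with $\bar\pi^{2k - 2} \mid \alpha$ force $A \not\equiv 0 \bmod 3$ and $A + B \equiv 0 \bmod 3$, whence $B \not\equiv 0 \bmod 3$. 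Translating back via $y_3 = -B$ yields $3 \nmid y_3$, and hence also $3 \nmid y_2$. By Proposition~\ref{prop:semi-integral-criterion} the intersection multiplicities of $M$ with $\sD_1, \sD_2, \sD_3$ are $k, 0, 0$ at $p = 3$ and $0, 0, 0$ at every other prime, and $k = \omega_1$ is admissible for both the Campana and Darmon notion, so $M \in (\sX_m, \sDomega)_{\str}^*(\ZZ)$ as required.
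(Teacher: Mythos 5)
Your proof is correct and reaches the same candidate point $(3^{\omega_1} : 11\cdot 3^{\omega_1} : y_2 : y_3)$ and the same representation problem $y_2^2 - 11y_2y_3 + y_3^2 = (m-121)\,3^{2\omega_1}$ as the paper, but solves it by a genuinely different route. The paper works directly with the binary form $f = y^2 - 11yz + z^2$ of (non-fundamental) discriminant $117 = 9\cdot 13$: it verifies everywhere-local primitive representability of $n$, invokes Cassels's local-global theorem to obtain a primitive representation by some form in the genus of $f$, and then exploits $\#\sG(117) = 2 = \#\Cl^+(117)$ so that the genus is a single proper class; the coprimality $3 \nmid y_2y_3$ then falls out of primitivity essentially for free. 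You instead descend via $y_2 = y_3 + 3z$ to the fundamental discriminant $13$ and solve the resulting norm-form equation inside $\ZZ[\theta]$, $\theta = (1+\sqrt{13})/2$, using $h(\QQ(\sqrt{13})) = 1$, the fundamental unit of norm $-1$, quadratic reciprocity for the splitting condition, and the factor $\bar\pi^{2\omega_1 - 2}$ to supply the $3$-power; the price is the extra local computation in $\ZZ[\theta]/3 \cong \FF_3\times\FF_3$ to certify $3 \nmid y_3$, which you carry out correctly (there you use $\pi \nmid \alpha$ and $\bar\pi \mid \alpha$, both of which hold since $3\nmid m-121$ and $2\omega_1 - 2 \ge 2$). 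Your approach avoids genus theory for the conductor-$3$ order at the cost of a slightly more delicate coprimality check; the paper's approach is shorter once one accepts the genus-theoretic black box. One small imprecision: the explicit $\SL_2(\ZZ)$-change of variables identifying $z^2 - 3zy_3 - y_3^2$ with $N(A + B\theta) = A^2 + AB - 3B^2$ gives $y_3 = B$ (with $z = A + 2B$) or $y_3 = -B$ (with $z = A - B$) depending on the choice; either way the sign is immaterial for the $3$-adic conclusion, so nothing breaks.
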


\begin{proof}
  We may assume that $\omega_1 < \infty$. Under our assumptions $m \equiv 3 \bmod 9$ and therefore $\sU_m(\ZZ) = \emptyset$ by \cite[Prop.~6.1]{Ghosh2017IntegralPO}. In view of Proposition~\ref{prop:semi-integral-criterion-2-inf} and Remark~\ref{rem:inclusion-and-weights}, we shall be looking at $M \in (\sX_m,\sDomega)_{\str}^*(\ZZ) \setminus \sU_m(\ZZ)$ of the shape $(3^{\omega_1}, 11 \cdot 3^{\omega_1}, y, z)$ with $3 \nmid yz$. Let $n = (m - 121)3^{2\omega_1}$. Substituting $M$ in \eqref{eqn:Xm} and dividing through $3^{\omega_1}$ gives
  \[
    y^2 - 11yz + z^2 = n.
  \]

  We claim that $f(y,z) = y^2 - 11yz + z^2$ of discriminant $117$ represents $n$ primitively over $\ZZ$. The indefinite form $f$ represents $n$ over $\RR$. On the other hand, the congruence condition on $d$ guaranties the existence of a primitive representation of $n$ over $\ZZ_2$, $\ZZ_3$ and $\ZZ_{13}$, while so does the restriction on the prime divisors of $n$ over $\ZZ_p$ for all remaining primes dividing $n$. We may now apply \cite[Thm~5.1, p.~143]{Cas78}, which confirms that some form in the genus of $f$ represents $n$ primitively over $\ZZ$. The total number of genera $\#\sG(117)$ is equal to two, hence $\sG(117) \cong \Cl^+(117)$. Therefore, all forms in the genus of $f$ are properly equivalent to each other, and $f$ properly represents $n$, as claimed. Finally, it remains to show that $3 \nmid yz$. Indeed, if $3 \mid y$, then as $3 \mid n$ we must have $3 \mid z$, which contradicts the primitiveness of the representation. Similarly, if $3 \mid z$. This concludes the proof.
\end{proof}

We are now in position to prove Theorem~\ref{thm:count}. To do so it suffices to evaluate $S(B)$, which counts the number of $m$ as in Proposition~\ref{prop:st-semi-integral-points}, whose absolute value is at most $B$. 

\begin{proof}[Proof of Theorem~\ref{thm:count}]
  Let $S(B)$ be as above. We claim that there is a real constant $c > 0$, such that
  \begin{equation}
    \label{eqn:S(B)}
    S(B) = c\frac{B}{(\log B)^{1/2}} + O\(\frac{B}{\log B}\).
  \end{equation}

  To simplify what follows, let
  \[
    \rho(n) =
    \begin{cases}
      1 &\text{if }  p \mid n \implies p \bmod 13 \in R, \\
      0 &\text{otherwise}.
    \end{cases} 
  \]
  Let $n = 6d - 121$. This is equivalent to $n \equiv 5 \bmod 6$. The conditions of Proposition~\ref{prop:st-semi-integral-points} imply that all prime divisors of $n$ must lie in $R$ after reduction mod 13 and $n = 6d - 121 \equiv 53 \bmod 78$. The latter congruence clearly implies $n \equiv 5 \bmod 6$ and therefore
  \[
    S(B)
    = \sum_{\substack{n \le B - 121 \\ n \equiv 53 \bmod 78}} \rho(n)
    = \frac{1}{24} \sum_{\chi \bmod 78} \chi(53) T_{\chi}(B)
  \]
  by the orthogonality of Dirichlet characters mod 78 and the fact that $53$ is its own inverse mod 78. Here for a Dirichlet character $\chi$ mod 78 we have defined
  \[
    T_{\chi}(B)
    =  \sum_{n \le B - 121} \rho(n)\chi(n).
  \]
  
  The treatment of $T_{\chi}(B)$ can be done using the Landau-Selberg-Delange method. Let $\chi_0$ be the trivial Dirichlet character mod 78 and denote by $\chi_1$ the Dirichlet character mod 78 given by $\(\frac{\cdot}{13}\)\chi_0(\cdot)$. We claim that there is another real constant $c' > 0$, such that
  \begin{equation}
    \label{eqn:T_chi}
    T_{\chi}(B) =
      \begin{cases}
      c' B(\log B)^{-1/2} + O\(B(\log B)^{-1}\) &\text{if } \chi = \chi_0 \text{ or } \chi = \chi_1, \\
      O\(B(\log B)^{-1}\)  &\text{otherwise.}
      \end{cases}
  \end{equation}
  This verifies \eqref{eqn:S(B)} with $c = c'/12$ as $\chi_1(53) = 1$.

  Let $F(s, \chi)$ be the Dirichlet series corresponding to $T_{\chi}(B)$, which is well-defined for $\sigma > 1$  under the standard notation $s = \sigma + i\tau$. Define $\psi(\cdot) = \(\frac{\cdot}{13}\)\chi(\cdot)$. Since $R = (\ZZ/13\ZZ)^{\times 2} \subset (\ZZ/13\ZZ)^\times$, by the binomial series expansion we conclude that $F(s, \chi)$ is given by
  \[
    \begin{split}
      F(s, \chi) 
      &= \sum_{n = 1}^{\infty} \frac{\rho(n)\chi(n)}{n^s}
      = \prod_{p \bmod 13 \in R} \(1 - \frac{\chi(p)}{p^s}\)^{-1} \\
      &= \prod_{p} \(1 - \frac{1}{2}\(1 + \(\frac{p}{13}\)\)\frac{\chi(p)}{p^s}\)^{-1} \\
      &= \prod_{p} \(1 - \frac{\chi(p)}{p^s}\)^{-1/2}\(1 - \frac{\psi(p)}{p^s} \)^{-1/2} E_p(s)
      = L(s, \chi)^{1/2} L(s, \psi)^{1/2}E(s), 
    \end{split}
  \]
  where the function $E(s) = \prod_{p} E_p(s)$ satisfies $E_p(s) = 1 + O(p^{-2\sigma})$.  

  Note that $|\rho(n)\chi(n)| \le \rho(n)$ and $L(s, \chi_0) = \zeta(s)(1 - 1/2^s)(1 - 1/3^s)(1 - 1/13^s)$. It is now clear that $F(s, \chi)$ satisfies the hypothesis of \cite[Thm.~II.5.2]{Ten15} with $N = 0$, and with $z = w = 1/2$ if $\chi = \chi_0, \chi_1$ and $z = 0$, $w = 1/2$ for the remaining characters mod 78. Indeed, this is verified by \cite[Thm.~11.3, p.~360]{MV07} and \cite[Thm.~11.4, p.~362]{MV07} as $L(s, \chi_0 \(\frac{\cdot}{13}\))$ and $L(s, \(\frac{\cdot}{13}\))$ have no Siegel zeroes (eg. LMFDB), where $\(\frac{\cdot}{13}\)$ is the quadratic character mod 13. This confirms \eqref{eqn:T_chi} and completes the proof of Theorem~\ref{thm:count}.
\end{proof}

Finally, we will show that the above idea also adapts to other families of Markoff orbifold pairs. As a corollary we provide more examples of pairs for which there are strict semi-integral points but no integral points.

\begin{theorem}
  \label{thm:existencepcases}
  Let $p>2$ be a prime such that $\Cl(p^2+4)$ is trivial. Assume that one of $\omega_i$ is finite. If $m$ is of the form $(2+p^2)^2\pm p^{\alpha}k^2$ for some integers $\alpha \geq 0, k>0$ with $p \nmid k$, then $(\sX_m,\sD_{\uomega})_{\str}^{*}(\ZZ ) \neq \emptyset$.
\end{theorem}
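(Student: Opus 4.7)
The plan is to adapt the strategy of Proposition~\ref{prop:st-semi-integral-points}: without loss of generality assume $\omega_1$ is the finite weight, and search for an explicit strict semi-integral point of the shape $M=(p^{\omega_1},(p^2+2)p^{\omega_1},y,z)$ with $y,z\in\ZZ$ coprime to $p$, as permitted by Proposition~\ref{prop:semi-integral-criterion-2-inf}. Substituting $M$ into \eqref{eqn:Xm} and cancelling one factor of $p^{\omega_1}$ reduces the problem to solving
\[
 y^2-(p^2+2)yz+z^2 \;=\; \pm p^N k^2,\qquad N=\alpha+2\omega_1\ge 4,
\]
with $p\nmid yz$; Proposition~\ref{prop:semi-integral-criterion} then upgrades any such $(y,z)$ to a strict semi-integral point on $(\sX_m,\sD_{\uomega})$ in both the Campana and Darmon senses, and reversing the substitution recovers $m=(p^2+2)^2\pm p^\alpha k^2$.

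The left-hand side above is the norm form on the order $\ZZ[\tau]\subset K=\QQ(\sqrt{p^2+4})$ with $\tau=\tfrac{(p^2+2)+p\sqrt{p^2+4}}{2}$. A pleasant coincidence is that $\tau=\varepsilon^2$ for $\varepsilon=\tfrac{p+\sqrt{p^2+4}}{2}$, so $\ZZ[\tau]$ is an index-$p$ suborder of $\ZZ[\varepsilon]$, whose ideal class group coincides with $\Cl(p^2+4)$ and is trivial by hypothesis. The element $\pi=\varepsilon+1\in\ZZ[\varepsilon]$ has norm $p$ and generates a prime ideal above $p$. I would then set
\[
 \eta'' \;=\; \pi^{N-2}k \quad\text{or}\quad \eta''\;=\;\varepsilon\,\pi^{N-2}k
\]
depending on whether the target sign is $+$ or $-$, giving $N_{K/\QQ}(\eta'')=\pm p^{N-2}k^2$. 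Since the conductor of $\ZZ[\tau]$ inside $\ZZ[\varepsilon]$ divides $p$, the inclusion $p\ZZ[\varepsilon]\subseteq\ZZ[\tau]$ holds, so $\eta:=p\eta''$ lies in $\ZZ[\tau]$ and writes uniquely as $y-z\tau$ with $y,z\in\ZZ$, producing $f(y,z)=N(\eta)=\pm p^N k^2$.

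The main obstacle is the verification of $p\nmid yz$. Expanding $\eta''=s+t\varepsilon$ in the $\{1,\varepsilon\}$-basis and using $\tau=p\varepsilon+1$ to translate to the $\{1,\tau\}$-basis yields $z=-t$ and $y=ps-t$, so the coprimality condition collapses to $p\nmid t$. Setting $(\varepsilon+1)^j=A_j+B_j\varepsilon$, the recursion $A_{j+1}=A_j+B_j$, $B_{j+1}=A_j+(p+1)B_j$ reduces modulo $p$ to multiplication by $\bigl(\begin{smallmatrix}1&1\\1&1\end{smallmatrix}\bigr)$, and a quick induction gives $A_j\equiv B_j\equiv 2^{j-1}\pmod{p}$ for $j\ge 1$. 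Hence $t\equiv\pm 2^{N-3}k\pmod{p}$, which is non-zero since $p>2$, $p\nmid k$, and $N\ge 4$. This confirms $p\nmid t$ in both sign cases and closes the construction, yielding the desired strict semi-integral point.
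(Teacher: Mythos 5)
Your proof is correct, and it takes a genuinely different — and in fact stronger — route than the paper's. The paper proceeds abstractly: after reducing to showing that $h(t,z)=t^2+ptz-z^2$ (discriminant $p^2+4$) properly represents $\pm p^{l-2}$, it invokes Cox's Lemma~2.5 to produce \emph{some} form of discriminant $p^2+4$ that does the job, and then uses triviality of $\Cl(p^2+4)$ (upgraded to $\Cl^+(p^2+4)$ via the norm-$(-1)$ unit) to conclude that the particular form $h$ represents it. Your approach instead produces an explicit algebraic integer $\pi=\varepsilon+1\in\ZZ[\varepsilon]$ of norm $p$, observes the pleasant identity $\tau=\varepsilon^2$ relating the order $\ZZ[\tau]$ (attached to the form $y^2-(p^2+2)yz+z^2$) to its overorder $\ZZ[\varepsilon]$, and writes down $\eta=p\,\pi^{N-2}k$ or $p\,\varepsilon\pi^{N-2}k$ as an explicit element of $\ZZ[\tau]$ of norm $\pm p^Nk^2$. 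The base-change computation $y=ps-t$, $z=-t$ and the mod-$p$ recursion for $(\varepsilon+1)^j$ correctly reduce the coprimality check to $p\nmid t$, which your induction $A_j\equiv B_j\equiv 2^{j-1}\pmod p$ handles for both signs (incidentally, both cases give $t\equiv 2^{N-3}k$, not $\pm 2^{N-3}k$, but this does not affect the conclusion).

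The most notable feature of your argument is that it never actually uses the hypothesis that $\Cl(p^2+4)$ is trivial: you mention it, and you say $\pi$ "generates a prime ideal above $p$", but everything you need is the \emph{element} $\pi$ of norm $p$, not any statement about ideal classes. The paper's route needs the class group hypothesis precisely because it passes through an existential statement (some form represents $p^{l-2}$) and must pin down \emph{which} form; your constructive route sidesteps that entirely. So your argument proves the conclusion for every odd prime $p$, removing a hypothesis from the theorem. Two small presentational nits: the reference for verifying that your chosen point is semi-integral should be Proposition~\ref{prop:semi-integral-criterion} (the valuation criterion), with Proposition~\ref{prop:semi-integral-criterion-2-inf} serving only as the heuristic guide for what shape to try; and when you translate from the $\{1,\varepsilon\}$-basis to the $\{1,\tau\}$-basis, it is worth stating explicitly that $\ZZ[\tau]=\ZZ+\ZZ p\varepsilon$, which is what makes $p\ZZ[\varepsilon]\subset\ZZ[\tau]$ transparent.
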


\begin{proof}
  We will show the existence of semi-integral points of the shape $(p^{\omega_1},(2+p^2)p^{\omega_1},kx,ky)$ with $p\nmid kxy$. These points belong to $(\sX_m,\sD_{\uomega})_{\str}^{*}(\ZZ )$ by Proposition~\ref{prop:semi-integral-criterion}. Substituting such a point in \eqref{eqn:Xm} and dividing by $p^{\omega_1}$ yields
  \[
    x^2-(2+p^2)xy+y^2 = \pm p^{2\omega_1+\alpha}.
  \]

  Let $f(x,y)=x^2-(2+p^2)xy+y^2$ be the binary quadratic form defined by the left hand side. We will show that $f$ is properly equivalent to a quadratic form, which represents any $\pm p^l$ with $l \in \ZZ_{\ge 4}$. The transformation $x \mapsto -z,y\mapsto w-z$ is proper and maps $f$ to $g(w,z)= w^2+p^2w z -p^2z^2$. It is clear, that any representation $g(w,z)=\pm p^l$ must satisfy $p\mid w$. If $w= pt$ it then suffices to show that $h(t,z)=t^2+pt z -z^2$ properly represents $\pm p^{l-2}$. The form $h$ properly represents $p^{l-2}$ if and only if it properly represents $-p^{l-2}$, which may be seen for example from the proper transformation $t \mapsto z, z\mapsto -t$. The discriminant of $h$ reduces is a square mod $p$. Applying Hensel's lemma now shows that $\sqrt{p^2+4}\in \ZZ_p$, and therefore the congruence $t^2\equiv p^2+4 \pmod{p^{l-2}}$ is soluble. By \cite[Lemma 2.5]{cox_2013} there exists a form of discriminant $p^2+4$ that properly represents $p^{l-2}$. This form is primitive since the discriminant is coprime to $p$. By assumption $\Cl(p^2+4)$ is trivial. Moreover, the narrow form class group $\Cl^+(p^2+4))$ is then also trivial since the fundamental unit $1/2(p-\sqrt{p^2+4}) \in \QQ(\sqrt{p^2+4})$ is of norm $-1$. Therefore, all primitive forms of discriminant $p^2+4$ are properly equivalent to each other and thus $h$ properly represents $p^{l-2}$. Finally, because of the shape of $h$ and any proper representation of $p^{l-2}$ obeys $p \nmid tz$. Then $p \nmid xy$ as $x = -z$ and $y=pt -z$, which concludes the proof. 
  \end{proof}

\begin{remark}
 A quick computation reveals that the primes $2<p<1000$ for which $\Cl(p^2+4)$ is trivial are given by 3, 5, 7, 11, 13, 17.
\end{remark}

\begin{example}
 By the methods of reduction theory developed in \cite{Ghosh2017IntegralPO}, the authors showed that $m=46$ is the first positive integral Hasse failure, i.e. $\sU_{46}(\ZZ_p) \neq \emptyset$ for all $p$, but $\sU_{46}(\ZZ)=\emptyset$ . Notice that we can write $46 = (2+3^2)^2-3^15^2$, and $\Cl(3^2+4)$ is trivial. So by Theorem~\ref{thm:existencepcases} there exist semi-integral points on $(\sX_{46},\sD_{\uomega})_{\str}^{*}(\ZZ )$ for arbitrary weights.  
\end{example}

The following theorem investigates the existence of semi-integral points on a specific family of Markoff surfaces given in \cite[Prop.5.12 (iv)]{colxu_2020}, which has been shown to have no integral points. 

\begin{theorem}
  \label{thm:existence-CTXW}
 Let $m=4-3l^2$ for some prime $l\geq 17$. Assume that one of the $\omega_i$ is finite. If $-3$ is a square mod $84+l^2$, 
then $(\sX_m,\sD_{\uomega})_{\str}^{*}(\ZZ ) \neq \emptyset$.
\end{theorem}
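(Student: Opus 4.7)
By Remark~\ref{rem:inclusion-and-weights} we may assume $\omega_1 < \infty$ and $\omega_2 = \omega_3 = \infty$. Following the template of Proposition~\ref{prop:st-semi-integral-points} and Theorem~\ref{thm:existencepcases}, the plan is to construct a strict semi-integral point of shape $(3^{\omega_1}, 16 \cdot 3^{\omega_1}, y, z)$ with $3 \nmid yz$, which is automatically Darmon, hence Campana, by Propositions~\ref{prop:semi-integral-criterion} and \ref{prop:semi-integral-criterion-2-inf}. Substituting into \eqref{eqn:Xm} and dividing by $3^{\omega_1}$ yields the binary quadratic form equation $y^2 - 16 yz + z^2 = -3 \cdot 9^{\omega_1}(l^2 + 84)$. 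After the $\SL_2(\ZZ)$-substitution $y = u + 8v$, $z = v$ and extracting the factor of $3$ from $u$ dictated by a short $3$-adic valuation argument, this becomes the norm equation
\[
s^2 - 7 v^2 = -3^{2\omega_1 - 1}(l^2 + 84)
\]
in $\ZZ[\sqrt{7}]$, to be solved with the additional requirement $3 \nmid v$.

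The class number of $\QQ(\sqrt{7})$ is one, so the Hasse norm principle for the cyclic extension $\QQ(\sqrt{7})/\QQ$ reduces integer solvability to the vanishing of the Hilbert symbols $\bigl(-3^{2\omega_1-1}(l^2+84),\, 7\bigr)_p$ at every place $p$ of $\QQ$. A direct computation handles $p \in \{\infty, 2, 3, 7, l\}$. For the remaining primes $p \mid l^2 + 84$, the relation $l^2 \equiv -84 \pmod{p}$ gives $\left(\tfrac{-84}{p}\right) = 1$, and quadratic reciprocity yields the two identities $\left(\tfrac{-3}{p}\right) = \left(\tfrac{p}{3}\right)$ and $\left(\tfrac{p}{3}\right)\left(\tfrac{7}{p}\right) = \left(\tfrac{-84}{p}\right) = 1$. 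The hypothesis that $-3$ is a square modulo $l^2 + 84$ therefore forces $\left(\tfrac{7}{p}\right) = \left(\tfrac{-3}{p}\right) = 1$, which is precisely the required triviality.

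To meet the condition $3 \nmid v$, the key observation is that $3 \nmid l^2 + 84$ forces any integer solution of the auxiliary equation $a^2 - 7 b^2 = -3(l^2 + 84)$ to satisfy $3 \nmid b$ (otherwise the right hand side reduces mod $9$ to a non-square in $\ZZ/9\ZZ$). Multiplying such a solution by $4 + \sqrt{7}$ or $4 - \sqrt{7}$, both of norm $9$ in $\ZZ[\sqrt{7}]$, chosen according to the relative residues of $a$ and $b$ modulo $3$, produces an integer solution of norm $-27(l^2 + 84)$ that still satisfies the condition; iterating this step $\omega_1 - 1$ times yields the sought solution, and hence the desired strict semi-integral point on $(\sX_m, \sDomega)$.

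The main subtlety is bridging the hypothesis on $-3$ modulo $l^2 + 84$ with the Hilbert symbol conditions for norms from $\QQ(\sqrt{7})$; this is accomplished by the reciprocity identities above, which emerge naturally from the constraint $l^2 + 84 \equiv 0 \pmod{p}$. Once this link is in place, the rest of the argument proceeds via routine manipulations in the quadratic order $\ZZ[\sqrt{7}]$.
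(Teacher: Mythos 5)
Your proof is correct in substance and takes a genuinely different route from the paper, though one step is glossed over. Both arguments open identically: construct a point of shape $(3^{\omega_1}, 16\cdot 3^{\omega_1}, y, z)$ with $3 \nmid yz$ and reduce, via the $\SL_2(\ZZ)$-equivalence $x^2 - 16xy + y^2 \sim x^2 - 63y^2$ and extraction of a factor of $3$, to showing $s^2 - 7v^2 = -3^{2\omega_1-1}(l^2+84)$ has a suitable integer solution. From there the methods diverge. The paper invokes \cite[Lemma 2.5]{cox_2013} to produce a representing form of discriminant $28$, then identifies the genus by checking directly that $-3^{2\omega_1-1}(l^2+84)$ is a square modulo $28$, and concludes because each genus of discriminant $28$ contains exactly one class. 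You instead verify the Hilbert symbol conditions $\bigl(-3^{2\omega_1-1}(l^2+84), 7\bigr)_p = 1$ at every place, with the key computation at $p \mid l^2 + 84$ flowing from the hypothesis on $-3$ via quadratic reciprocity; this is a clean and essentially equivalent local analysis. Where your write-up is imprecise is the bridge from ``locally a norm everywhere'' to ``integrally representable by $x^2 - 7y^2$'': the Hasse norm theorem alone gives rational solvability, and ``$h(\QQ(\sqrt 7))=1$'' does not immediately close the gap, since the \emph{narrow} class number is $2$ (the fundamental unit $8 + 3\sqrt 7$ has norm $+1$). What saves you is that the genus condition mod $28$ is exactly encoded by the Hilbert symbols at $2$ and $7$ (the condition $(n,7)_2 = 1$ forces $n \equiv 1 \bmod 4$, and $(n,7)_7 = 1$ forces $n$ a square mod $7$, together giving $n \in \{1,9,25\} \bmod 28$), and since $\#\Cl^+(28) = 2$ equals the genus number, each genus has one class; this is precisely the paper's genus-theoretic input, just repackaged. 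You should make that step explicit rather than attributing it to class number $1$. Your treatment of the condition $3 \nmid v$ is a genuinely different and elegant device: rather than relying, as the paper does implicitly, on the primitivity of the representation at the level of $f(x,y) = x^2 - 16xy + y^2$, you solve the auxiliary equation $a^2 - 7b^2 = -3(l^2+84)$ (where $3 \nmid b$ is automatic by the mod $9$ obstruction) and then ratchet up the power of $3$ by multiplying by $4 \pm \sqrt 7$ of norm $9$, choosing the sign so that the coefficient of $\sqrt 7$ stays prime to $3$; this is correct and has the merit of producing an explicit solution.
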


\begin{proof}
 Without loss of generality we may assume that $\omega_1$ is finite. We want to show the existence of semi-integral points of the shape $\(3^{\omega_1},16\cdot 3^{\omega_1},x,y\)$ with $3 \nmid xy.$ Plugging points of this form in \eqref{eqn:Xm} yields
\[
 x^2-16xy+y^2 = -3^{2\omega_1+1}(84+l^2).
\]
Thus, one may conclude the proof by showing that the form $f(x,y)=x^2-16xy+y^2$ properly represents the RHS of the above equation. Now, $f$ is properly equivalent to $x^2-63y^2$, and thus by similar manipulations as in the proof Theorem~\ref{thm:existencepcases}, it suffices to show that the form $h(x,y)=x^2-7y^2$ properly represents $-3^{2\omega_1-1}(84+l^2)$. The discriminant of $h$ is a quadratic residue modulo $3$, and also modulo $84+l^2$ because $28 \equiv x^2 \pmod{84+l^2}$ has a solution if and only if $-3 \equiv x^2 \pmod{84+l^2}$ has a solution which is one of our assumptions. Since $\gcd(3,84+l^2)=1$, again by \cite[Lemma 2.5]{cox_2013} we find that there exists some form of discriminant $28$ that represents $-3^{2\omega_1-1}(84+l^2)$. In order to conclude that this form is in the same class as in $\Cl^+(28)$, we notice that $-3$ is a quadratic residue mod $28$, and therefore $-3^{2\omega_1-1}(84+l^2)$ is in the subgroup of squares in $(\ZZ/28\ZZ)^*$. Thus, any form representing this value is in the principal genus $\Cl^+(28)^2$. Since $|\Cl^+(28)|=2$, comparing the orders we may conclude that all forms in the principal genus $\Cl^+(28)^2$ are properly equivalent to each other, hence proving our claim.
\end{proof}

\bibliographystyle{amsalpha}{}
\bibliography{bibliography}
\end{document}